\definecolor{webgreen}{rgb}{0,.5,0}
\definecolor{webbrown}{rgb}{.6,0,0}
\def\msquare{\mathord{\scalerel*{\Box}{gX}}}
\def\modd#1 #2{#1\ \mbox{\rm (mod}\ #2\mbox{\rm )}}
\def\tmodd#1 #2{\mbox{\tiny{$\modd{#1} {#2}$}}}
\begin{document}


\theoremstyle{plain}
\newtheorem{theorem}{Theorem}
\newtheorem{corollary}[theorem]{Corollary}
\newtheorem{lemma}[theorem]{Lemma}
\newtheorem{proposition}[theorem]{Proposition}

\theoremstyle{definition}
\newtheorem{definition}[theorem]{Definition}
\newtheorem{example}[theorem]{Example}
\newtheorem{conjecture}[theorem]{Conjecture}

\theoremstyle{remark}
\newtheorem{remark}[theorem]{Remark}

\begin{center}

\vskip 1cm

{\LARGE\bf Curious Bounds for Floor Function Sums}

\vskip 1cm

\large
Thotsaporn Thanatipanonda and Elaine Wong\footnote{Correspondence should be addressed to Elaine Wong: \href{mailto:wongey@gmail.com}{\tt wongey@gmail.com} .} \\ Science Division \\ Mahidol University International College \\ Nakhon Pathom, 73170 \\ Thailand \\
\href{mailto:thotsaporn@gmail.com}{\tt thotsaporn@gmail.com} \\
\href{mailto:wongey@gmail.com}{\tt wongey@gmail.com}

\end{center}

\vskip .2 in

\begin{abstract}
The sums of floor functions have been studied by Jacobsthal, Carlitz, Grimson, and Tverberg. More recently, Onphaeng and Pongsriiam proved some sharp upper and lower bounds for the sums of Jacobsthal and Tverberg.  In this paper, we devise concise formulas for the sums and then use it to give proofs of the upper and lower bounds that were claimed by Tverberg. Furthermore, we present conjectural lower and upper bounds for these sums.
\end{abstract}

\section{Introduction}
In 1957, Jacobsthal \cite{jacobsthal} defined and studied a function of the form $$f_m(\{ a_1, a_2 \},k)=\left\lfloor\frac{a_1+a_2
+k}{m}\right\rfloor-\left\lfloor\frac{a_1+k}{m}\right\rfloor-\left\lfloor\frac{a_2+k}{m}\right\rfloor+\left\lfloor\frac{k}{m}\right\rfloor$$ for fixed $m\in\mathbb{Z}^+$ with $a_1,a_2,k \in\mathbb{Z}$.  He also defined the functions $$S_m(\{ a_1,a_2 \}, K)=\sum\limits_{k=0}^Kf_m(\{ a_1,a_2\},k), \qquad 0\leq a_1,a_2,K\leq m-1.$$  It is of interest to note that we will take advantage of the $m$-periodicity of $f_m$, and so we restrict our $a_1,a_2,$ and $K$ values accordingly for the sum. Jacobsthal, then later Carlitz \cite{carlitz}, Grimson \cite{grimson}, and Tverberg \cite{tverberg} proved $S_m(\{ a_1, a_2 \},K)\geq 0$.  In 2012, Tverberg \cite{tverberg} proposed a generalized notation for these sum functions for any set $A =\{ a_1,\ldots,a_n \}$ with $0\leq a_1,\ldots,a_n,K\leq m-1$ and $n=|A|$, that is, $$S_m(\{ a_1,\ldots,a_n\} ,K)=\sum\limits_{k=0}^K \sum\limits_{T\subset[1,n]}(-1)^{n-|T|}\left\lfloor\frac{k+\sum_{i\in T} a_i}{m}\right\rfloor.$$ He also claimed without proof the other upper and lower bounds of $S_m$ for sets $\{ a_1, a_2 \} $ and $\{ a_1,a_2,a_3\}$ (i.e., $n=2,3$).  In 2017, Onphaeng and Pongsriiam \cite{day} furnished a proof for the upper bounds when $n$ is even and $\geq 4$ and the lower bounds when $n$ is odd and $\geq 3$.  In this paper, we investigate the bounds for $S_m$ for all $n\in\mathbb{Z}^+$ and supply the missing proofs of Tverberg's upper bounds.  Furthermore, we conjecture all the bounds for $S_m$ not previously mentioned and summarize the findings in Table \ref{table:credit}. Credit attributed to authors who claim the statement without proof are denoted with an asterisk (*). Otherwise, a proof is given in their corresponding paper.

\begin{center}
\begin{table}[H]
{\renewcommand{\arraystretch}{1.2}
\resizebox{\columnwidth}{!}{%
\begin{tabular}{|c||c|c||c|c|}
\hline
$n$& Lower Bound & Lower Bound Credit & Upper Bound & Upper Bound Credit\\
\hline
1 & 0 & Trivial & $m-1$ & Trivial \\
\hline
\multirow{5}{*}{2} & \multirow{5}{*}{0} & Jacobsthal; & \multirow{5}{*}{$\left\lfloor\frac{m}{2}\right\rfloor$} & \multirow{5}{*}{\begin{tabular}{c}Tverberg*;\\ \textbf{Corollary \ref{thm:2varfull}} \end{tabular}}\\
&&Carlitz;&&\\
&&Grimson;&&\\
&&Tverberg; &&\\
&& \textbf{Theorem \ref{thm:2varlowerbound}} &&\\
\hline
\multirow{3}{*}{3} & \multirow{3}{*}{$-2\left\lfloor\frac{m}{2}\right\rfloor$} &Tverberg*; & \multirow{3}{*}{$\left\lfloor\frac{m}{3}\right\rfloor$} & \multirow{3}{*}{\begin{tabular}{c} Tverberg*;\\ \textbf{Corollary \ref{thm:3varfull}} \end{tabular}}\\
&& Onphaeng, && \\
&& Pongsriiam && \\
\hline
4 & $-3\left\lfloor\frac{m}{3}\right\rfloor$ & \multirow{2}{*}{\textbf{Conjecture \ref{conj:main}*}} &$4\left\lfloor\frac{m}{2}\right\rfloor$ & Onphaeng,\\
&(Conjecture)&&&Pongsriiam\\
\hline
odd &\multirow{2}{*}{$-2^{n-2}\left\lfloor\frac{m}{2}\right\rfloor$}&Onphaeng, & \multirow{2}{*}{(Conjectures)} & \multirow{2}{*}{\textbf{Conjecture \ref{conj:main}*}}\\
($\geq 5$)&&Pongsriiam &&\\
\hline
even&\multirow{2}{*}{(Conjectures)}&\multirow{2}{*}{\textbf{Conjecture \ref{conj:main}*}}&\multirow{2}{*}{$2^{n-2}\left\lfloor\frac{m}{2}\right\rfloor$}&Onphaeng,\\
($\geq 5$)&&&&Pongsriiam\\
\hline
\end{tabular}%
}
}
\label{table:credit}
\end{table}
\end{center}

We discuss the case of $n=1$ in this section as it sets the foundation for the main strategy that we use to prove higher cases. We begin with an explicit definition of Jacobsthal's sum.

\begin{definition} For any $m\in\mathbb{Z}$ and $a_1,K\in\mathbb{Z}^+\cup\{0\}$,
\begin{equation*}
S_m(\{ a_1\} ,K)=\sum\limits_{k=0}^K \left(\left\lfloor\frac{a_1+k}{m}\right\rfloor - \left\lfloor\frac{k}{m}\right\rfloor\right).
\end{equation*}
\end{definition}

The sum can be written concisely without using the summation symbol, which we will show below. Note that the periodicity that existed in the $n\geq 2$ case does not exist here.  However, we only prove the following proposition for $0\leq K\leq m-1$ because that is all that will be needed for higher values of $n$. From this point on, we let $a$ $(\mbox{mod } m)$ denote the minimal non-negative representative in the $\mathbb{Z}/m$-equivalence class.

\begin{proposition} For $0\leq K \leq m-1$ and $a_1\in\mathbb{Z}^+\cup\{0\}$,
\label{prop:main1}
\begin{equation*}
S_m(\{ a_1\} ,K)=\left\lfloor\frac{a_1}{m}\right\rfloor (K+1) + \max (0, \modd{a_1} {m} + K - m + 1).
\end{equation*}
\end{proposition}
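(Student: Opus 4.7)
The plan is to reduce the sum to a count by peeling off the integer quotient of $a_1$ and then carefully evaluating the remaining floor terms on a restricted range.

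First I would write $a_1 = qm + r$ with $q = \lfloor a_1/m \rfloor$ and $r = a_1 \pmod{m}$, so that $0 \leq r \leq m-1$. Using the identity $\lfloor (qm + r + k)/m \rfloor = q + \lfloor (r+k)/m \rfloor$, the summand in the definition becomes $q + \lfloor (r+k)/m \rfloor - \lfloor k/m \rfloor$. Because the hypothesis restricts $k$ to $0 \leq k \leq K \leq m-1$, the term $\lfloor k/m \rfloor$ vanishes identically, and summing the constant $q$ from $k=0$ to $K$ contributes $q(K+1) = \lfloor a_1/m \rfloor (K+1)$, matching the first term in the claimed formula.

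Next I would focus on the remaining sum $\sum_{k=0}^K \lfloor (r+k)/m \rfloor$. Since $0 \leq r+k \leq 2m-2$, the floor $\lfloor (r+k)/m \rfloor$ is either $0$ or $1$, and it equals $1$ precisely when $k \geq m - r$. Thus the remaining sum is simply the count
\begin{equation*}
\#\{\, k : 0 \leq k \leq K,\ k \geq m - r\,\}.
\end{equation*}
A case split finishes the job: if $K < m - r$ (equivalently $r + K - m + 1 \leq 0$), the count is $0$; otherwise the count is $K - (m-r) + 1 = r + K - m + 1 \geq 1$. In both cases the value coincides with $\max(0, r + K - m + 1) = \max(0, (a_1 \bmod m) + K - m + 1)$, which combined with the earlier contribution gives the proposition.

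There is no real obstacle here; the only subtlety is recognizing that the restriction $K \leq m-1$ is exactly what makes $\lfloor k/m \rfloor$ vanish and simultaneously bounds $\lfloor (r+k)/m \rfloor$ by $1$, collapsing the sum to a single counting argument. The case split on the sign of $r + K - m + 1$ produces the $\max$, and the result follows without any further computation.
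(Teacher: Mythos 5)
Your proof is correct and follows essentially the same route as the paper's: both peel off the integer part $\lfloor a_1/m\rfloor$ from each summand (the paper via the identity $\lfloor (a_1+k)/m\rfloor = \lfloor a_1/m\rfloor + \lfloor k/m\rfloor + \lfloor\{a_1/m\}+\{k/m\}\rfloor$, you via $a_1 = qm+r$, which amounts to the same thing given $k\le m-1$), and then evaluate $\sum_{k=0}^{K}\lfloor (r+k)/m\rfloor$ by observing that each term is $0$ or $1$ and counting the indices where it equals $1$. The only cosmetic difference is that the paper cancels the two $\lfloor k/m\rfloor$ terms against each other while you note each vanishes outright; nothing of substance changes.
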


\begin{proof}

We observe $$\left\lfloor\frac{a_1+k}{m}\right\rfloor = \left\lfloor\frac{a_1}{m}\right\rfloor + \left\lfloor\frac{k}{m}\right\rfloor + \left\lfloor\left\lbrace\frac{a_1}{m}\right\rbrace + \left\lbrace\frac{k}{m}\right\rbrace\right\rfloor,$$ where $\{ x \}=x - \left\lfloor x\right\rfloor$, the fractional part of $x$. This notation is distinguished from the usual set notation according to context. Furthermore, since $0\leq k \leq m-1$, $$\left\lfloor\left\lbrace\frac{a_1}{m}\right\rbrace + \left\lbrace\frac{k}{m}\right\rbrace\right\rfloor = \left\lfloor \frac{\modd{a_1} {m}}{m}+\frac{k}{m}\right\rfloor.$$ The result is derived as follows.

\begin{align*}
\sum\limits_{k=0}^K \left(\left\lfloor\dfrac{a_1+k}{m}\right\rfloor\right.& - \left. \left\lfloor\dfrac{k}{m}\right\rfloor\right)\\
= & \sum\limits_{k=0}^K\left(\left\lfloor\frac{a_1}{m}\right\rfloor + \left\lfloor\frac{k}{m}\right\rfloor+\left\lfloor \left\lbrace\frac{a_1}{m}\right\rbrace + \left\lbrace\frac{k}{m}\right\rbrace\right\rfloor - \left\lfloor\frac{k}{m}\right\rfloor \right) \\
= &\left\lfloor\frac{a_1}{m}\right\rfloor(K+1) + \sum_{k=0}^K\left\lfloor\frac{\modd{a_1} {m}}{m}+\frac{k}{m}\right\rfloor\\
= & \left\lfloor\frac{a_1}{m}\right\rfloor(K+1)+ \sum\limits_{k=0}^{m-(\tmodd{a_1} {m})-1} \left\lfloor\frac{\modd{a_1} {m} +k}{m}\right\rfloor\\
& +\sum\limits_{k=m-\tmodd{a_1} {m}}^K \left\lfloor\frac{\modd{a_1} {m}+k}{m}\right\rfloor \\
= & \left\lfloor\frac{a_1}{m}\right\rfloor(K+1)+\sum\limits_{k=0}^{m-(\tmodd{a_1} {m})-1} 0 +\sum\limits_{k=m-\tmodd{a_1} {m}}^K 1 \\
= & \left\lfloor\frac{a_1}{m}\right\rfloor (K+1) + \max (0,\modd{a_1} {m} + K - m + 1)
\end{align*}

\end{proof}

The bounds for $S_m(\{a_1\},K)$ are then easily attained from Proposition \ref{prop:main1}.

\begin{corollary}
For $0\leq a_1,K \leq m-1$, $$0\leq S_m(\{ a_1 \} ,K) \leq m-1 .$$ In particular, the maximum occurs precisely when $a_1=K=m-1$.
\end{corollary}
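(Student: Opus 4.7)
The plan is to substitute the hypothesis $0 \le a_1 \le m-1$ directly into the closed form supplied by Proposition \ref{prop:main1} and read off the bounds. Since $0 \le a_1 \le m-1$, we have $\lfloor a_1/m \rfloor = 0$ and $a_1 \bmod m = a_1$, so the formula collapses to
\[
S_m(\{a_1\}, K) \;=\; \max\bigl(0,\; a_1 + K - m + 1\bigr).
\]
The lower bound $S_m(\{a_1\},K) \ge 0$ is then immediate from the definition of $\max$.

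For the upper bound, I would use the hypothesis $K \le m-1$ together with $a_1 \le m-1$ to estimate $a_1 + K - m + 1 \le (m-1) + (m-1) - m + 1 = m-1$, which combined with the trivial bound $0 \le m-1$ gives $\max(0, a_1 + K - m + 1) \le m-1$.

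For the equality characterization, I would note that $S_m(\{a_1\},K) = m-1$ forces $a_1 + K - m + 1 = m - 1$, i.e., $a_1 + K = 2(m-1)$. Since each of $a_1$ and $K$ lies in $[0, m-1]$, this equation has the unique solution $a_1 = K = m-1$, and one checks that this choice indeed yields $S_m = m-1$.

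There is no real obstacle here: the entire argument is a direct corollary, and the only thing to be careful about is recording the ``precisely when'' claim, which follows from the fact that $a_1 + K = 2(m-1)$ pins down both variables uniquely within the allowed range.
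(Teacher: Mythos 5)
Your proposal is correct and follows the same route as the paper: both substitute the hypothesis $0\leq a_1\leq m-1$ into the closed form of Proposition \ref{prop:main1} to reduce the sum to $\max(0,a_1+K-m+1)$ and read off the bounds. Your write-up merely makes explicit the upper-bound estimate and the equality analysis that the paper leaves implicit.
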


\begin{proof}
The result follows from the fact that $$S_m(\{ a_1\},K)=\max (0,a_1+K-m+1).$$ 
\end{proof}

In the following sections, we use similar methods to provide bounds when $n>1$ for the sums $S_m(\{ a_1,\ldots,a_n \},K)$.

\section{Lower and Upper Bounds for \texorpdfstring{$n=2$}{}}

The lower bound for $n=2$ has been shown by Carlitz \cite{carlitz}, Grimson \cite{grimson}, Jacobsthal \cite{jacobsthal}, and Tverberg \cite{tverberg}, while the upper bound was first mentioned by Tverberg.  Like in the case of $n=1$, we introduce a new form for the sum by generalizing Proposition \ref{prop:main1} and then prove its upper bound.  We also use this form to provide a new proof for its lower bound. We begin like before, by writing out the sum of Jacobsthal explicitly.

\begin{definition}
\label{def:2varsum}
For any $m\in\mathbb{Z}^+$ and any $a_1, a_2, K \in \mathbb{Z}^+ \cup \{0\}$,
\begin{equation*}
S_m(\{  a_1,a_2\},K)=\sum\limits_{k=0}^K \left(\left\lfloor\frac{a_1+a_2+k}{m}\right\rfloor - \left\lfloor\frac{a_1+k}{m}\right\rfloor - \left\lfloor\frac{a_2+k}{m}\right\rfloor + \left\lfloor\frac{k}{m}\right\rfloor\right).
\end{equation*}
\end{definition}

We show that the sum can be written concisely without using the summation symbol, similar to Proposition \ref{prop:main1}.

\begin{proposition}
\label{prop:main2}
For $0\leq K \leq m-1$, and any $a_1,a_2 \in \mathbb{Z}^+\cup \{0\}$,
\begin{align*}
S_m(\{ a_1,a_2\}, K) =& \left(\left\lfloor\frac{a_1+a_2}{m}\right\rfloor-\left\lfloor\frac{a_1}{m}\right\rfloor- \left\lfloor\frac{a_2}{m}\right\rfloor \right)(K+1)\\
& +\max (0, \modd{(a_1+a_2)} {m} + K-m+1)\\
& -\max (0, \modd{a_1} {m} + K-m+1)\\
&-\max (0, \modd{a_2} {m} + K-m+1).
\end{align*}
\end{proposition}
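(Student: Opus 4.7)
The plan is to reduce the two-variable sum to a signed combination of three one-variable sums and then apply Proposition \ref{prop:main1} directly.

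First, I would rewrite the summand in Definition \ref{def:2varsum} by grouping the $\lfloor k/m\rfloor$ term appropriately. Specifically, I would observe the identity
\[
\left\lfloor\tfrac{a_1+a_2+k}{m}\right\rfloor - \left\lfloor\tfrac{a_1+k}{m}\right\rfloor - \left\lfloor\tfrac{a_2+k}{m}\right\rfloor + \left\lfloor\tfrac{k}{m}\right\rfloor
= \Bigl(\left\lfloor\tfrac{a_1+a_2+k}{m}\right\rfloor - \left\lfloor\tfrac{k}{m}\right\rfloor\Bigr) - \Bigl(\left\lfloor\tfrac{a_1+k}{m}\right\rfloor - \left\lfloor\tfrac{k}{m}\right\rfloor\Bigr) - \Bigl(\left\lfloor\tfrac{a_2+k}{m}\right\rfloor - \left\lfloor\tfrac{k}{m}\right\rfloor\Bigr),
\]
which is a trivial rearrangement (the coefficients of $\lfloor k/m\rfloor$ on both sides sum to $+1$). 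Summing each grouping separately from $k=0$ to $K$ yields the clean decomposition
\[
S_m(\{a_1,a_2\},K) = S_m(\{a_1+a_2\},K) - S_m(\{a_1\},K) - S_m(\{a_2\},K).
\]

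Next, since we are assuming $0 \le K \le m-1$ (and $a_1, a_2, a_1+a_2$ are all in $\mathbb{Z}^+\cup\{0\}$, which is all Proposition \ref{prop:main1} requires), I can apply Proposition \ref{prop:main1} to each of the three one-variable sums on the right-hand side. This produces three contributions of the form $\left\lfloor\tfrac{\cdot}{m}\right\rfloor(K+1) + \max(0,\,(\cdot \bmod m) + K - m + 1)$.

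Finally, I would collect terms. The three floor terms combine into the single factor $\left(\left\lfloor\tfrac{a_1+a_2}{m}\right\rfloor - \left\lfloor\tfrac{a_1}{m}\right\rfloor - \left\lfloor\tfrac{a_2}{m}\right\rfloor\right)(K+1)$, and the three $\max$ terms come out with the signs $+,-,-$ exactly as asserted in the proposition. There is no real obstacle here; the only subtlety is recognizing the decomposition into one-variable pieces, after which the result falls out by a single application of Proposition \ref{prop:main1}.
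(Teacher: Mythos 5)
Your proposal is correct and follows exactly the paper's own argument: decompose $S_m(\{a_1,a_2\},K)$ as $S_m(\{a_1+a_2\},K)-S_m(\{a_1\},K)-S_m(\{a_2\},K)$ and apply Proposition \ref{prop:main1} to each piece. The extra detail you give about the coefficients of $\lfloor k/m\rfloor$ balancing is a fine sanity check but not a departure from the paper's route.
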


\begin{proof}
Rewriting the two-variable sum in Definition \ref{def:2varsum} as a series of one-variable sums,  $$S_m(\{ a_1, a_2 \}, K)=S_m(\{ a_1+a_2 \},K)-S_m(\{ a_1 \},K)-S_m(\{ a_2 \}, K),$$ allows us to apply Proposition \ref{prop:main1} to each sum to get our result.
\end{proof}

A symmetry exists in Proposition \ref{prop:main2}. We outline the pattern in the lemma below.  This, along with a partial result in Theorem \ref{thm:2var}, gives us the desired upper and lower bounds in Corollary \ref{thm:2varfull} and Theorem \ref{thm:2varlowerbound}, respectively.

\begin{lemma}
\label{thm:2reflect}
(Mirrored Sums) For $0\leq a_1,a_2\leq m-1$ and $0\leq K\leq m-2$, $$S_m(\{ a_1,a_2 \},K) = S_m(\{ m-a_1,m-a_2 \},m-2-K).$$
\end{lemma}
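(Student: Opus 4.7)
The plan is to invoke Proposition \ref{prop:main2} on both sides of the claimed equality and verify that the two resulting closed-form expressions agree. First I will dispose of the edge case $a_1 = 0$ (and by symmetry $a_2 = 0$): directly from the definition, $f_m(\{0, a_2\}, k) \equiv 0$, so $S_m(\{0, a_2\}, K) = 0$; and by the $m$-periodicity of $f_m$ in each of its $a_i$-slots (immediate from $\lfloor(x+m)/m\rfloor = \lfloor x/m\rfloor + 1$), $S_m(\{m, m - a_2\}, m-2-K) = S_m(\{0, m-a_2\}, m-2-K) = 0$ as well, so both sides vanish.

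In the remaining main case $1 \le a_1, a_2 \le m-1$, each of the four numbers $a_1, a_2, m-a_1, m-a_2$ lies in $[1, m-1]$, hence equals its own residue modulo $m$ with floor $0$ on division by $m$. Applying Proposition \ref{prop:main2} to each side produces explicit expressions whose difference I will collapse using the elementary identity $\max(0, y) - \max(0, -y) = y$. Taking $y_i = a_i + K - m + 1$, this identity pairs the term $\max(0, a_i + K - m + 1)$ appearing in $S_m(\{a_1, a_2\}, K)$ with $\max(0, (m-a_i) + (m-2-K) - m + 1) = \max(0, m - 1 - K - a_i)$ appearing in $S_m(\{m-a_1, m-a_2\}, m-2-K)$; the same mechanism will handle the combined-residue term whenever $(a_1 + a_2) \bmod m \neq 0$.

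To finish, I split on whether $a_1 + a_2 < m$, $a_1 + a_2 = m$, or $a_1 + a_2 > m$, which determines the Iverson-bracket coefficients $[a_1 + a_2 \ge m]$ and $[2m - a_1 - a_2 \ge m] = [a_1 + a_2 \le m]$ together with the values of $(a_1 + a_2) \bmod m$ and $(2m - a_1 - a_2) \bmod m$. In the two strict cases the pairing identity applies to all three pairs of $\max$-terms, and the leftover linear terms (the $(K+1)$ coefficient on one side versus the $(m-1-K)$ coefficient on the other, plus the algebraic residues $a_i + K - m + 1$) cancel by inspection. The main obstacle is the boundary case $a_1 + a_2 = m$, where $(a_1 + a_2) \bmod m = (2m - a_1 - a_2) \bmod m = 0$ and the pairing identity does not apply directly to the combined-residue term; here I exploit the hypothesis $K \le m-2$ to see that both offending terms $\max(0, K - m + 1)$ and $\max(0, -1 - K)$ vanish, after which the remaining algebra collapses exactly to the subcase hypothesis $a_1 + a_2 = m$.
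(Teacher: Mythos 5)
Your proposal is correct and follows essentially the same route as the paper's proof: apply Proposition \ref{prop:main2} to both sides, pair the $\max$-terms via $\max(0,x)-\max(0,-x)=x$, and split on $a_1+a_2$ versus $m$ (the paper reduces to $a_1+a_2\leq m$ by symmetry rather than computing the $>m$ case directly, and isolates the edge case $a_1=a_2=0$ where you isolate $a_1=0$ or $a_2=0$, but these are cosmetic differences). The computations you sketch all check out, including the boundary case $a_1+a_2=m$ where $K\leq m-2$ kills both combined-residue terms.
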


\begin{proof}
It is enough to show the claim for $0\leq a_1+a_2 \leq m$. Otherwise, we have that $(m-a_1)+(m-a_2) < m$, in which case we can use a similar argument by substituting $a_1$ with $m-a_1$ and $a_2$ with $m-a_2$.

For the case $a_1=a_2 = 0$, the result trivially holds by Definition \ref{def:2varsum}. For the case $0<a_1+a_2<m$, Proposition \ref{prop:main2} simplifies to

\begin{equation}
\begin{split}
\label{eq:reflectleft}
S_m(\{ a_1&,a_2\}, K)=\max (0,a_1+a_2+K-m+1)\\ 
&- \max (0,a_1+K-m+1) - \max (0,a_2+K-m+1).
\end{split}
\end{equation} Furthermore, we note that $m<2m-(a_1+a_2)<2m$, which gives

\begin{align}
\begin{split}
\label{eq:reflectright}
S_m(\{ m-a_1&,m-a_2\}, m-2-K)= m-1-K+\max (0,m-(a_1+a_2)-K-1)\\
&- \max (0,m-a_1-K-1) - \max (0,m-a_2-K-1).
\end{split}
\end{align} Now consider the following equations that use the fact that $$\max (0,x)-\max (0,-x)=x, \mbox{ for all }x \in \mathbb{R}.$$ \begin{align}
\label{eq:max1} \begin{split} \max &(0,a_1+a_2+K-m+1) -\max (0,-a_1-a_2-K+m-1) \\
& = a_1+a_2 + K-m+1, \end{split} \\
\label{eq:max2} \begin{split} \max &(0,m-a_1-K-1)-\max (0,a_1+K-m+1)\\ & = m-a_1-K-1, \end{split} \\
\label{eq:max3} \begin{split} \max &(0,m-a_1-K-1)- \max (0,a_2+K-m+1)\\
& = m-a_2-K-1. \end{split}
\end{align} Then, (\ref{eq:max1})+(\ref{eq:max2})+(\ref{eq:max3}) confirms (\ref{eq:reflectleft}) = (\ref{eq:reflectright}). Finally, we consider $a_1+a_2=m$. Here, Proposition \ref{prop:main2} gives \begin{align}
\begin{split}
\label{eq:reflectmleft} S_m & (\{ a_1, a_2 \}, K)\\
& = K +1- \max (0,a_1+K-m+1)-\max (0,a_2+K-m+1),
\end{split}\\
\begin{split} 
\label{eq:reflectmright} S_m&(\{ m-a_1, m-a_2 \}, m-2-K)\\
& = m-(K+1) - \max (0,m-a_1-K-1) - \max (0,m-a_2-K-1).
\end{split}
\end{align} In this case, (\ref{eq:max2})+(\ref{eq:max3}) confirms (\ref{eq:reflectmleft}) = (\ref{eq:reflectmright}). This concludes the proof.

\end{proof}

We show the upper bound for half the range of $K$ using differences.

\begin{theorem}
\label{thm:2var}
For $0\leq a_1,a_2 \leq m-1$ and $0 \leq K \leq \left\lfloor \frac{m}{2}\right\rfloor - 1$,
\begin{equation*}
S_m(\{ a_1,a_2 \}, K) \leq \left\lfloor\frac{m}{2}\right\rfloor.
\end{equation*}
\end{theorem}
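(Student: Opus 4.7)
The plan is to apply Proposition~\ref{prop:main2} directly and exploit the hypothesis $a_1, a_2 \in [0, m-1]$ to simplify. In this range, $a_i \bmod m = a_i$ and $\lfloor a_i/m \rfloor = 0$, while $\lfloor (a_1+a_2)/m \rfloor \in \{0,1\}$, so the coefficient of $(K+1)$ is either $0$ or $1$. Thus $S_m(\{a_1, a_2\}, K)$ reduces to a short combination of three $\max(0, \cdot)$ expressions, plus possibly an extra $(K+1)$, with each maximum's sign controlled by a simple linear inequality in $a_i, K, m$.

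I would split into two main cases according to whether $a_1 + a_2 < m$ (Case A) or $a_1 + a_2 \geq m$ (Case B), and within each case further split according to the signs of $a_1 + K - m + 1$ and $a_2 + K - m + 1$. The hypothesis $K \leq \lfloor m/2 \rfloor - 1$, which in particular gives $K < m/2$, plays two crucial roles. First, it rules out the sub-case of Case A in which both $a_i + K \geq m$, since that would force each $a_i \geq m - K > m/2$ and hence $a_1 + a_2 > m$, contradicting Case A. Second, whenever $a_i + K \geq m$ we obtain $m - a_i \leq K \leq \lfloor m/2 \rfloor - 1$, which controls the contribution from that variable in the remaining branches.

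In each surviving sub-case, the sum collapses to one of the explicit expressions $0$, $a_1 + a_2 + K - m + 1$, $a_2$, $K + 1$, $m - a_1$, or a combination that telescopes to $0$. Each is bounded above by $\lfloor m/2 \rfloor$ using $K + 1 \leq \lfloor m/2 \rfloor$ together with the complementary inequalities on the $a_i$ derived above. The extremal value among these is $K + 1$, attained in Case B when both $a_i + K < m$, which gives exactly the bound $\lfloor m/2 \rfloor$ at $K = \lfloor m/2 \rfloor - 1$. The main obstacle is simply organizing the case analysis cleanly so that no sub-case is overlooked and the signs of the $\max$ terms are tracked consistently; the arithmetic within each branch is routine.
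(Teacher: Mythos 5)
Your proposal is correct, but it takes a genuinely different route from the paper. You bound $S_m(\{a_1,a_2\},K)$ by a direct case analysis on the closed form of Proposition \ref{prop:main2}: splitting on $a_1+a_2<m$ versus $a_1+a_2\geq m$ and on the signs of the $\max$ arguments, each branch collapses to one of $0$, $a_1+a_2+K-m+1\leq K$, $a_2$, $K+1$, $m-a_1$, or an expression that telescopes to $0$, and the hypothesis $K\leq\left\lfloor\frac{m}{2}\right\rfloor-1$ correctly kills the one problematic branch (both $a_i+K\geq m$ with $a_1+a_2<m$) and bounds the rest; in fact every branch is $\leq K+1$, so you recover the paper's stronger intermediate claim as well. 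The paper instead proceeds by induction on $K$: it verifies $S_m(\{a_1,a_2\},0)\leq 1$ and then shows that the difference $\Delta_m:=S_m(\{a_1,a_2\},K)-S_m(\{a_1+1,a_2\},K-1)$ satisfies $-1\leq\Delta_m\leq 1$ via a four-case table, which telescopes to $S_m\leq K+1$. Your approach is more self-contained and arguably more elementary for this one theorem, since it needs no induction and no auxiliary difference operator. What the paper's route buys is reusability: the same $\Delta_m$ case table (Table \ref{table:chartdelta}) is the engine of the lower-bound proof in Theorem \ref{thm:2varlowerbound} (where one argues $\Delta_m\neq -1$) and of the $n=3$ upper bound in Theorem \ref{thm:3var} (where $\msquare_m$ is decomposed into three $\Delta_m$'s), so if you adopted your direct argument you would still need something like the paper's $\Delta_m$ analysis later anyway.
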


\begin{proof}
We show the stronger result, that for $0\leq K\leq \left\lfloor\frac{m}{2}\right\rfloor -1$, $$S_m(\{ a_1,a_2\},K)\leq K+1.$$

For the case $K=0$,
\begin{align*}
S_m(\{ a_1, a_2 \}, 0) = & \left\lfloor\frac{a_1+a_2}{m}\right\rfloor + \max (0,\modd{(a_1+a_2)} {m}-m+1) \\
&-\max (0,a_1-m+1)-\max (0,a_2-m+1) \\
= & \left\lfloor\frac{a_1+a_2}{m}\right\rfloor + 0 - 0 - 0 \\
\leq &1.
\end{align*} For the case $1 \leq K\leq\left\lfloor\frac{m}{2}\right\rfloor - 1$, it is enough to show that $$\Delta_m:=S_m(\{ a_1,a_2\},K)-S_m(\{ a_1+1,a_2 \},K-1)\leq 1.$$ By using Proposition \ref{prop:main2} we explicitly write out the two sums \begin{align*}
S_m(\{ a_1,a_2\}, K)=&\left\lfloor\frac{a_1+a_2}{m}\right\rfloor (K+1)\\
& +\max (0,\modd{(a_1+a_2)} {m} + K-m+1)\\
& -\max (0,a_1+K-m+1)\\
& -\max (0,a_2+K-m+1),\\
S_m(\{ a_1+1,a_2\}, K-1) =& \left\lfloor\frac{a_1+a_2+1}{m}\right\rfloor K\\
& +\max (0,\modd{(a_1+a_2+1)} {m} + K - m)\\
& -\max (0,a_1+K-m+1)\\
& -\max (0,a_2+K-m).
\end{align*} We determine $\Delta_m$ in four cases according to the possible values of $a_1+a_2$ and $a_2+K-m+1$.

\textbf{Case 1: ($a_1+a_2<m$ and $a_2+K-m+1 \leq 0$)}

If $a_1+a_2<m-1$, then both sums are the same. \begin{align*}
S_m(\{ a_1,a_2\}, K) =  0 & + \max (0,a_1+a_2 + K- m+1)\\ 
&-\max (0,a_1+K-m+1) - 0,\\
S_m(\{ a_1+1,a_2\}, K-1)  = 0 & + \max (0,(a_1+1)+a_2 + K - m)\\ 
&-\max (0,(a_1+1)+K-m) - 0.
\end{align*} If $a_1+a_2=m-1$, then both sums evaluate to $K$.  Therefore, we get $\Delta_m=0$ for this case.

\textbf{Case 2: ($a_1+a_2<m$ and $a_2+K-m+1 > 0$)}

Again, we assume $a_1+a_2<m-1$.
\begin{align*}
S_m(\{ a_1,a_2\}, K) = 0 & + \max (0,a_1+a_2 + K - m+1)\\ 
&-\max (0,a_1+K-m+1) - (a_2+K-m+1),\\
S_m(\{ a_1+1,a_2\}, K-1) = 0 & + \max (0,(a_1+1)+a_2 + K - m+1)\\ 
&-\max (0,(a_1+1)+K-m) - (a_2+K-m).
\end{align*} Therefore, $\Delta_m=S_m(\{ a_1,a_2\}, K)- S_m(\{ a_1+1,a_2\}, K-1)=-1.$ A similar argument holds for $a_1+a_2=m-1$.

\textbf{Case 3: ($a_1+a_2\geq m$ and $a_2+K-m+1 \leq 0$)} 

\begin{align*}
S_m(\{ a_1,a_2\}, K)=(K & +1)+\max (0,(a_1+a_2-m)+K-m+1)\\
&-\max (0,a_1+K-m+1) - 0\\
S_m(\{ a_1+1,a_2\}, K-1) = K & +\max (0,(a_1+1+a_2-m)+K-m)\\
&-\max (0,(a_1+1)+K-m) - 0
\end{align*} Therefore, $\Delta_m=S_m(\{ a_1,a_2\}, K) -S_m(\{ a_1+1,a_2\}, K-1) = +1$.

\textbf{Case 4: ($a_1+a_2 \geq m$ and $a_2+K-m+1 >0$)}

$\Delta_m=0$ using similar reasoning to Case 2 and Case 3. 

We summarize the four cases for $0<K\leq\left\lfloor\frac{m}{2}\right\rfloor-1$ in the table below.
\begin{table}[ht]
{\renewcommand{\arraystretch}{1.2}
\begin{center}
\begin{tabular}{|c|c|c|c|}
\hline
Case & $a_1+a_2$ & $a_2+K-m+1$ & $\Delta_m$\\
\hline
1 & $<m$ & $\leq 0$ & 0\\
2 & $<m$ & $>0$ & $-1$\\
3 & $\geq m$ & $\leq 0$ & $+1$\\
4 & $\geq m$ & $>0$ & 0\\
\hline
\end{tabular}
\end{center}
}
\caption{Summary of $\Delta_m$ values.}
\label{table:chartdelta}
\end{table}

This shows that $-1\leq \Delta_m \leq 1$. Thus, we have shown that $$S_m(\{ a_1,a_2 \}, K) \leq S_m(\{ a_1,a_2\}, K-1) + 1\leq  K+1 \leq \left\lfloor\frac{m}{2}\right\rfloor$$ for all $0\leq K\leq \left\lfloor\frac{m}{2}\right\rfloor-1$.
\end{proof}

We apply Lemma \ref{thm:2reflect} and Theorem \ref{thm:2var} to show the upper bound for $S_m$.

\begin{corollary}
\label{thm:2varfull}
For $0\leq a_1,a_2,K\leq m-1$,$$S_m(\{ a_1,a_2 \}, K) \leq \left\lfloor\frac{m}{2}\right\rfloor.$$
\end{corollary}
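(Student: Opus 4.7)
The plan is to partition the range $0\leq K\leq m-1$ into three regions, each handled by a different tool. Theorem \ref{thm:2var} already delivers the bound $S_m(\{a_1,a_2\},K)\leq \lfloor m/2\rfloor$ on the ``lower half'' $0\leq K\leq \lfloor m/2\rfloor-1$, so no further work is needed there.

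For the ``upper half'' $\lceil m/2\rceil - 1 \leq K\leq m-2$, I would invoke the mirror identity of Lemma \ref{thm:2reflect},
\[ S_m(\{a_1,a_2\},K) = S_m(\{m-a_1,\, m-a_2\},\, m-2-K), \]
and set $K' := m-2-K$. A quick case analysis on the parity of $m$ shows $0\leq K'\leq \lfloor m/2\rfloor-1$, so Theorem \ref{thm:2var} applies to the right-hand side and yields the same bound. Together with the lower half, this covers all $K\in[0,m-2]$. A minor technicality is that $m-a_i$ leaves the nominal range $[0,m-1]$ precisely when $a_i=0$; but in that case one reads off immediately from Definition \ref{def:2varsum} that $S_m(\{a_1,a_2\},K)\equiv 0$, so we may assume $a_1,a_2\geq 1$ whenever we want to invoke the mirror.

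The one case still missing is $K=m-1$, which Lemma \ref{thm:2reflect} excludes. I propose to dispatch it by direct computation from Proposition \ref{prop:main2}: with $0\leq a_i\leq m-1$ we have $\lfloor a_i/m\rfloor=0$ and $a_i \bmod m = a_i$, so each $\max(0, a_i+K-m+1)$ reduces to $a_i$, while $\max(0,(a_1+a_2) \bmod m + K-m+1)$ reduces to $(a_1+a_2) \bmod m$; substituting and using $(a_1+a_2) \bmod m = (a_1+a_2) - m\lfloor(a_1+a_2)/m\rfloor$ shows the sum collapses to $0$, which is trivially at most $\lfloor m/2\rfloor$.

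I do not anticipate a substantial obstacle: the argument is essentially combinatorial bookkeeping once Theorem \ref{thm:2var} and Lemma \ref{thm:2reflect} are in hand. The only points requiring care are (i) verifying that the two $K$-ranges genuinely cover $[0,m-2]$ for both parities of $m$, and (ii) cleanly disposing of the corner cases $a_i=0$ and $K=m-1$, both of which reduce to $S_m(\{a_1,a_2\},K)=0$.
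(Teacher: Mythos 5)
Your proposal is correct and follows essentially the same route as the paper: Theorem \ref{thm:2var} for the lower half of the $K$-range, Lemma \ref{thm:2reflect} to reflect the upper half back into that range, and a direct evaluation showing $S_m(\{a_1,a_2\},m-1)=0$. The extra care you take with the parity bookkeeping and the corner case $a_i=0$ (where the sum vanishes identically, or equivalently one invokes $a_i$-periodicity) is sound and only makes explicit what the paper leaves implicit.
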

\begin{proof}
For $0\leq K \leq \left\lfloor \frac{m}{2}\right\rfloor -1$, Theorem \ref{thm:2var} gives the result. For $\left\lfloor\frac{m}{2}\right\rfloor \leq K \leq m-2$, $S_m(\{ a_1,a_2\}, K) = S_m(\{ m-a_1,m-a_2 \}, m-2-K)$ by Lemma \ref{thm:2reflect}.  From there, we apply Theorem \ref{thm:2var} to the right hand side and get the result. Finally, for $K=m-1$, it is easily seen that $S_m(\{ a_1,a_2 \}, K)=0$. This completes the proof.
\end{proof}

The lower bound is now easy to show using $\Delta_m$.

\begin{theorem}
\label{thm:2varlowerbound}
For $0\leq a_1,a_2,K\leq m-1$, $$0 \leq S_m(\{ a_1,a_2 \}, K).$$
\end{theorem}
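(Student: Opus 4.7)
The plan is to leverage the explicit formulas for $S_m(\{a_1, a_2\}, K)$ already derived in the proof of Theorem \ref{thm:2var}. There, a four-way case split on whether $a_1+a_2 < m$ or $\geq m$, and whether $a_2+K-m+1 \leq 0$ or $> 0$, produced concrete expressions for $S_m(\{a_1, a_2\}, K)$ in terms of $\max(0, \cdot)$ quantities on the way to bounding $\Delta_m$. I would simply revisit those same four cases, this time reading off the formula for $S_m(\{a_1, a_2\}, K)$ itself and verifying non-negativity directly.

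In Case 1 ($a_1+a_2<m$, $a_2+K-m+1 \leq 0$), the sum reduces to $\max(0, a_1+a_2+K-m+1) - \max(0, a_1+K-m+1)$, which is non-negative because $a_2 \geq 0$ makes the first $\max$-argument at least as large as the second. In Case 2 (same first clause, $a_2+K-m+1 > 0$), a further subcase split on the sign of $a_1+K-m+1$ collapses $S_m(\{a_1, a_2\}, K)$ to either $a_1$ or $m-1-K$, both non-negative by hypothesis. In Case 3 ($a_1+a_2 \geq m$, $a_2+K-m+1 \leq 0$), one subcase yields $S_m(\{a_1, a_2\}, K) \geq K+1$, another yields $m - a_1 \geq 1$, and the remaining sub-subcase (where $a_1+a_2+K-2m+1>0$) is vacuous since it would force $a_2 \geq m-K$, contradicting $a_2 \leq m-1-K$. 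Finally, in Case 4, the formula reduces to $m - a_2 \geq 1$, or to $2m - a_1 - a_2 - K - 1 \geq 0$ (guaranteed by the hypothesis of its sub-subcase $a_1+a_2+K-2m+1 \leq 0$), or to exactly $0$.

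Since every case yields a manifestly non-negative expression, the theorem follows. No step presents a genuine obstacle; the argument is essentially bookkeeping built atop the $\Delta_m$ analysis already carried out in Theorem \ref{thm:2var}, exactly as the preamble to the theorem statement suggests.
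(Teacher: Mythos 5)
Your proof is correct, but it takes a genuinely different route from the paper's. The paper does \emph{not} re-derive $S_m$ in each case: it argues that under the WLOG ordering $a_2\leq a_1$ and for $0\leq K\leq\left\lfloor\frac{m}{2}\right\rfloor-1$, the Case 2 configuration ($a_1+a_2<m$ and $a_2+K-m+1>0$) is impossible, hence $\Delta_m\geq 0$ from Table \ref{table:chartdelta}; non-negativity then follows by telescoping from $S_m(\{a_1,a_2\},0)\geq 0$, and the remaining range $\left\lfloor\frac{m}{2}\right\rfloor\leq K\leq m-2$ is handled by the mirror-symmetry Lemma \ref{thm:2reflect}. You instead evaluate the closed form of Proposition \ref{prop:main2} directly in all four sign cases (with sub-splits on the signs of $a_1+K-m+1$ and $a_1+a_2+K-2m+1$) and check non-negativity of each resulting expression; I verified each of your reductions, including the vacuity of the subcase $a_1+a_2+K-2m+1>0$ under $a_2+K-m+1\leq 0$, and the identities $S_m=a_1$, $S_m=m-1-K$, $S_m=m-a_1$, $S_m=m-a_2$, $S_m=2m-a_1-a_2-K-1$, and $S_m=0$ are all right. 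What your approach buys is self-containedness: it covers the full range $0\leq K\leq m-1$ in one pass, needs no WLOG ordering of $a_1,a_2$, no induction on $K$, and no appeal to Lemma \ref{thm:2reflect}. What the paper's approach buys is reuse: the same $\Delta_m$ table and reflection machinery are already in place for the upper bound and are recycled again in the $n=3$ argument (Case A of Theorem \ref{thm:3var} explicitly cites the fact $\Delta_m\geq 0$ established in the paper's proof of this theorem), so if you adopt your version you would need to extract that $\Delta_m\geq 0$ statement separately for later use.
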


\begin{proof}
Without loss of generality, we assume that $0\leq a_2 \leq a_1 \leq m-1$.  Consider $0\leq K \leq \left\lfloor \frac{m}{2}\right\rfloor-1$.  Thus, the conditions of Case 2 (i.e. $a_1+a_2 < m$ and $a_2+K-m+1>0$) cannot be met because if $a_1+a_2< m$, then $a_2 \leq \left\lfloor\frac{m}{2} \right\rfloor$.  So, $a_2+K-m+1 \leq 0$.  This shows that  $\Delta_m \neq -1$, which means $\Delta_m=0$ or 1.  This, along with $S_m(\{ a_1,a_2 \}, 0) \geq 0$, gives us the lower bound as desired.  Next, we consider $\left\lfloor\frac{m}{2}\right\rfloor \leq K\leq m-2$, and apply Lemma \ref{thm:2reflect} to complete the argument. Lastly, $S_m(\{ a_1,a_2 \}, m-1)=0$.  We now have the complete result.
\end{proof}

\section{Upper Bound for \texorpdfstring{$n=3$}{}}

In this section, we follow the previous style of rewriting the sum, observing its symmetry, and using a difference to prove the upper bound. The lower bound has already been  proven in \cite{day}. This time, we use Tverberg's formulation to write out the sum explicitly.

\begin{definition}
\label{def:3varsum}
For any $m\in\mathbb{Z}^+$ and $a_1, a_2, a_3, K\in\mathbb{Z}^+\cup\{0\}$,
\begin{align*}
S_m(\{ a_1,a_2,a_3\},K)=&\sum\limits_{k=0}^K \left(\left\lfloor\frac{a_1+a_2+a_3+k}{m}\right\rfloor\right. \\
& - \left\lfloor\frac{a_1+a_2+k}{m}\right\rfloor-\left\lfloor\frac{a_2+a_3+k}{m}\right\rfloor-\left\lfloor\frac{a_1+a_3+k}{m}\right\rfloor\\
& + \left. \left\lfloor\frac{a_1+k}{m}\right\rfloor+\left\lfloor\frac{a_2+k}{m}\right\rfloor+\left\lfloor\frac{a_3+k}{m}\right\rfloor-\left\lfloor\frac{k}{m}\right\rfloor\right).
\end{align*}
\end{definition}

Like before, we show that the sum can be written concisely without using the summation symbol.

\begin{proposition}
\label{prop:main3} 
For $0\leq K \leq m-1$ and $a_1, a_2, a_3\in\mathbb{Z}^+\cup\{0\}$,
\begin{align*}
S_m(\{ a_1,a_2,a_3\}, K)=&\left(\left\lfloor\frac{a_1+a_2+a_3}{m}\right\rfloor - \left\lfloor\frac{a_1+a_2}{m}\right\rfloor - \left\lfloor\frac{a_2+a_3}{m}\right\rfloor\right.\\
& \left. -\left\lfloor\frac{a_1+a_3}{m}\right\rfloor + \left\lfloor\frac{a_1}{m}\right\rfloor + \left\lfloor\frac{a_2}{m}\right\rfloor + \left\lfloor\frac{a_3}{m}\right\rfloor\right)(K+1)\\
& + \max (0,\modd{(a_1+a_2+a_3)} {m} + K - m + 1)\\
& - \max (0,\modd{(a_1+a_2)} {m} + K - m + 1)\\
& - \max (0,\modd{(a_2+a_3)} {m} + K - m + 1)\\
& - \max (0,\modd{(a_1+a_3)} {m} + K - m + 1)\\
& + \max (0,\modd{a_1} {m}+K-m+1)\\ 
& + \max (0,\modd{a_2} {m}+K-m+1) \\
& + \max (0,\modd{a_3} {m}+K-m+1).
\end{align*}
\end{proposition}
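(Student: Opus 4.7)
The plan is to mimic exactly the strategy used in the proof of Proposition \ref{prop:main2}, namely, to decompose the three-variable Jacobsthal-type sum into a signed combination of one-variable sums, and then invoke Proposition \ref{prop:main1} termwise.

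First I would observe that by grouping the eight floor functions in Definition \ref{def:3varsum} according to how the $\lfloor k/m\rfloor$ terms can be paired with them, one obtains the identity
\begin{align*}
S_m(\{a_1,a_2,a_3\},K) =\ & S_m(\{a_1+a_2+a_3\},K) \\
& - S_m(\{a_1+a_2\},K) - S_m(\{a_2+a_3\},K) - S_m(\{a_1+a_3\},K)\\
& + S_m(\{a_1\},K) + S_m(\{a_2\},K) + S_m(\{a_3\},K).
\end{align*}
This is simply an inclusion--exclusion bookkeeping step: on the right-hand side, each of the seven summands contributes an implicit $-\lfloor k/m\rfloor$ inside the sum (coming from the definition of the one-variable $S_m$), and the signed total is $-1+3-3=-1$, matching the single $-\lfloor k/m\rfloor$ term in Definition \ref{def:3varsum}. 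All the other floors match directly.

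Next, I would apply Proposition \ref{prop:main1} to each of the seven one-variable sums on the right-hand side. Each such application produces a term of the form $\lfloor a/m\rfloor(K+1)$ together with a $\max(0,\, a\,(\mathrm{mod}\,m)+K-m+1)$ term, where $a$ ranges over $a_1+a_2+a_3$, the three pairwise sums, and the three $a_i$ individually. Collecting the $(K+1)$ coefficients with their signs reproduces exactly the bracketed expression on the first two lines of the proposition's statement, while collecting the $\max$ terms with their signs reproduces the remaining seven lines.

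There is no real obstacle here: the only thing that could go wrong is a miscount of signs or a mismatch between the pairings of $\lfloor k/m\rfloor$ contributions, so the step that warrants the most care is simply verifying the initial decomposition identity and confirming that Proposition \ref{prop:main1} applies (which it does, since $0\leq K\leq m-1$ and each argument $a_1+a_2+a_3$, $a_i+a_j$, $a_i$ lies in $\mathbb{Z}^+\cup\{0\}$). After that, the result follows by substitution and regrouping, entirely in parallel with the proof of Proposition \ref{prop:main2}.
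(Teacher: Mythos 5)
Your proof is correct and follows essentially the same route as the paper: a linearity (inclusion--exclusion) decomposition of the three-variable sum followed by termwise application of the earlier concise formula. The only cosmetic difference is that the paper decomposes into three two-variable sums, $S_m(\{a_1,a_2+a_3\},K)-S_m(\{a_1,a_2\},K)-S_m(\{a_1,a_3\},K)$, and applies Proposition \ref{prop:main2}, whereas you expand one level further into seven one-variable sums and apply Proposition \ref{prop:main1} directly; the two decompositions coincide after expansion, and your sign bookkeeping (including the $-1+3-3=-1$ count for the $\lfloor k/m\rfloor$ terms) checks out.
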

\begin{proof}

We can rewrite our three-variable sum in Definition \ref{def:3varsum} as the two-variable sums

\begin{equation}
\label{eq:threetotwo}
S_m(\{ a_1, a_2, a_3 \}, K) = S_m(\{ a_1,a_2+a_3 \},K)-S_m(\{ a_1,a_2 \},K)-S_m(\{ a_1,a_3 \}, K).
\end{equation}

We then apply Proposition \ref{prop:main2} to each sum to get our result.

\end{proof}

A symmetry exists for $S_m (\{ a_1,a_2,a_3 \}, K)$, similar to Lemma \ref{thm:2reflect}.

\begin{lemma}
\label{thm:3reflect}
(Mirrored Sums) For $0\leq a_1,a_2,a_3\leq m-1$ and $0\leq K\leq m-2$, $$S_m(\{ a_1,a_2,a_3 \},K) = S_m(\{ m-a_1,m-a_2,m-a_3 \},m-2-K).$$
\end{lemma}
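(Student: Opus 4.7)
The plan is to reduce this three-variable mirror identity to the two-variable Lemma \ref{thm:2reflect} via the same telescoping decomposition used to prove Proposition \ref{prop:main3}. Applying equation (\ref{eq:threetotwo}) to both sides of the desired equality gives
\begin{equation*}
S_m(\{a_1,a_2,a_3\},K) = S_m(\{a_1,a_2+a_3\},K) - S_m(\{a_1,a_2\},K) - S_m(\{a_1,a_3\},K),
\end{equation*}
together with a completely analogous decomposition of $S_m(\{m-a_1, m-a_2, m-a_3\}, m-2-K)$. The goal is then to match each two-variable piece on the left with its counterpart on the right.

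The key tool is a periodicity observation: the two-variable sum $S_m(\{b_1,b_2\},K')$ depends on $b_1, b_2$ only modulo $m$. This is immediate from Definition \ref{def:2varsum}, since replacing $b_i$ by $b_i+m$ introduces a $+1$ in exactly two of the four floors of each summand, and the two contributions cancel. Consequently Lemma \ref{thm:2reflect} applies even when its nominal inputs equal or exceed $m$, after an appropriate reduction modulo $m$.

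I would then apply Lemma \ref{thm:2reflect} to each of the three two-variable sums on the mirrored side. Two of them, $S_m(\{m-a_1, m-a_j\}, m-2-K)$ for $j=2,3$, pair directly with $S_m(\{a_1,a_j\},K)$. For the remaining term $S_m(\{m-a_1, (m-a_2)+(m-a_3)\}, m-2-K)$, I would use the congruence $(m-a_2)+(m-a_3) \equiv -(a_2+a_3)\pmod{m}$ together with periodicity to rewrite its second argument as $m - ((a_2+a_3) \bmod m)$ (or $0$ when $a_2+a_3 \equiv 0\pmod{m}$), invoke Lemma \ref{thm:2reflect}, and reduce modulo $m$ once more to arrive at $S_m(\{a_1, a_2+a_3\}, K)$. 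Recombining the three matched equations with the appropriate signs reassembles the three-variable decomposition.

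The principal obstacle is bookkeeping in the degenerate cases: when some $a_i$ vanishes the quantity $m-a_i$ equals $m$, and when $a_2+a_3$ straddles $m$ the sum $(m-a_2)+(m-a_3)$ has a mod-$m$ representative different from what one might naively expect. The periodicity observation is tailor-made to absorb all such situations uniformly, so no separate case split is required; one only needs to verify in each invocation that Lemma \ref{thm:2reflect} remains applicable after a suitable mod-$m$ reduction of its arguments.
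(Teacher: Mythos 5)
Your proposal is correct and follows essentially the same route as the paper: decompose via (\ref{eq:threetotwo}), use the mod-$m$ periodicity of the two-variable sums in their $a_i$ arguments to normalize $a_2+a_3$, apply Lemma \ref{thm:2reflect} to each piece, and reassemble. Your explicit justification of the periodicity (the two shifted floors carrying opposite signs) is a nice touch that the paper leaves implicit.
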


\begin{proof}
The three-variable sum can be rewritten as a series of two-variable sums and we can reason as follows:
\begin{align*}
S_m& (\{ a_1,a_2,a_3 \}, K)\\
= & S_m(\{ a_1,a_2+a_3 \},K) - S_m(\{ a_1,a_2 \},K)-S_m(\{ a_1,a_3 \}, K)\\
= & S_m(\{ a_1,\modd{(a_2+a_3)} {m} \},K)-S_m(\{ a_1,a_2 \},K)-S_m(\{ a_1,a_3 \}, K)\\
 = & S_m(\{ m-a_1, m-\modd{(a_2+a_3)} {m} \}, m-2-K) \\
& - S_m(\{ m-a_1, m-a_2 \}, m-2-K) \\
& - S_m(\{ m-a_1, m-a_3 \}, m-2-K) \\
= & S_m(\{ m-a_1, (m-a_2) + (m-a_3) \}, m-2-K) \\ 
& - S_m(\{ m-a_1, m-a_2 \}, m-2-K) \\
& - S_m(\{ m-a_1, m-a_3 \}, m-2-K) \\
= & S_m(\{ m-a_1, m-a_2, m-a_3 \}, m-2-K).
\end{align*}

The first and fifth equalities come from (\ref{eq:threetotwo}). We take advantage of the $a_i$-periodicity of the sums in the second and fourth equalities. Lastly, we apply Lemma \ref{thm:2reflect} in the third equality.
\end{proof}

Next, we show the upper bound for half the range of $K$ using $\Delta_m$.

\begin{theorem}
\label{thm:3var}
For $0\leq a_1,a_2,a_3 \leq m-1$ and $0 \leq K \leq \left\lfloor \frac{m}{2}\right\rfloor - 1$, $$S_m(\{ a_1,a_2,a_3 \}, K) \leq \left\lfloor\frac{m}{3}\right\rfloor.$$
\end{theorem}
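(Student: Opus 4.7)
The plan is to follow the template of Theorem~\ref{thm:2var}: handle the base case $K=0$ directly from Proposition~\ref{prop:main3}, then introduce the telescoping difference $\Delta_m := S_m(\{a_1,a_2,a_3\}, K) - S_m(\{a_1+1, a_2, a_3\}, K-1)$ and control it by case analysis. At $K=0$, every term $\max(0, x + K - m + 1)$ with $x\in[0,m-1]$ vanishes and each $\lfloor a_i/m\rfloor$ is zero, so Proposition~\ref{prop:main3} collapses to
\[
S_m(\{a_1,a_2,a_3\},0) = \left\lfloor\tfrac{a_1+a_2+a_3}{m}\right\rfloor - \left\lfloor\tfrac{a_1+a_2}{m}\right\rfloor - \left\lfloor\tfrac{a_2+a_3}{m}\right\rfloor - \left\lfloor\tfrac{a_1+a_3}{m}\right\rfloor.
\]
Splitting on $\lfloor(a_1+a_2+a_3)/m\rfloor\in\{0,1,2\}$ and using the identity $(a_1+a_2)+(a_2+a_3)+(a_1+a_3)=2(a_1+a_2+a_3)$ to constrain how many pair sums cross $m$, a short check shows the right-hand side is at most $1$, hence $\leq \lfloor m/3\rfloor$ for $m\geq 3$; the claim is vacuous for $m\leq 2$.

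For $1\leq K\leq\lfloor m/2\rfloor-1$, expanding $\Delta_m$ via Proposition~\ref{prop:main3} writes the difference as a signed combination of integer-part increments $\lfloor(x+1)/m\rfloor-\lfloor x/m\rfloor\in\{0,1\}$ together with differences of $\max(0,\cdot)$ terms. A case split indexed by (a) the positions of the four subset sums $a_1+a_2+a_3$, $a_1+a_2$, $a_1+a_3$, $a_2+a_3$ relative to the thresholds $m$ and $2m$, and (b) the signs of $a_j+K-m+1$ and $a_j+K-m$ for $j\in\{1,2,3\}$, would produce a table analogous to Table~\ref{table:chartdelta} but with considerably more rows. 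Each case reduces to a routine arithmetic verification, and I expect $-1\leq \Delta_m\leq 1$ to fall out uniformly.

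The main obstacle is that plain telescoping from the base case with $\Delta_m\leq 1$ only yields $S_m\leq K+1\leq \lfloor m/2\rfloor$, which falls short of the target $\lfloor m/3\rfloor$. Small experiments (e.g.\ $m=12$ with $a_1=a_2=a_3=4$, where $\Delta_m=+1$ holds for three consecutive values of $K$ and the running sum reaches exactly $\lfloor m/3\rfloor=4$) rule out any argument that $+1$ steps are sparse in isolation. The sharpening must therefore come from a finer use of the case table: for instance, showing that once $S_m$ reaches $\lfloor m/3\rfloor$ the subsequent increment $f_m(\{a_1,a_2,a_3\},K+1)$ must be $\leq 0$, so $S_m$ is non-increasing past its first peak. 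Extracting this rigidity from the case analysis is the heart of the proof and the step I would expect to require the most bookkeeping.
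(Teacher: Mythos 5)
Your setup (base case at $K=0$, then the diagonal difference $S_m(\{a_1,a_2,a_3\},K)-S_m(\{a_1+1,a_2,a_3\},K-1)$, which the paper calls $\msquare_m$) matches the paper's, but the proposal stops exactly where the proof has to start. You correctly observe that telescoping with $\msquare_m\leq 1$ only yields $S_m\leq K+1\leq\lfloor m/2\rfloor$, and you then defer the improvement to $\lfloor m/3\rfloor$ on the range $\lfloor m/3\rfloor\leq K\leq\lfloor m/2\rfloor-1$ to an unproved rigidity statement (``once $S_m$ reaches $\lfloor m/3\rfloor$ the next increment is $\leq 0$''). That statement concerns the $K$-direction increment $f_m(\{a_1,a_2,a_3\},K+1)$, which is a different quantity from the diagonal difference you analyze, and no argument is offered for it; as written, the proposal proves only the $\lfloor m/2\rfloor$ bound. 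A secondary problem: your expectation that $-1\leq\msquare_m\leq 1$ holds ``uniformly'' is not justified and in fact fails without normalizing the variables --- writing $\msquare_m=\Delta_m(\{a_1,a_2+a_3\},K)-\Delta_m(\{a_1,a_2\},K)-\Delta_m(\{a_1,a_3\},K)$ via (\ref{eq:threetotwo}), each term lies in $\{-1,0,1\}$, so $\msquare_m$ can a priori be as large as $3$; one needs $a_1=\max_i a_i$ so that the two subtracted terms are $\geq 0$ (the observation from the proof of Theorem \ref{thm:2varlowerbound}).

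The paper closes the gap as follows. With the ordering $a_3\leq a_2\leq a_1$ in force, $\msquare_m\leq 1$, and equality forces the unique sign pattern $(+1,0,0)$ for the three two-variable differences. If $\msquare_m\leq 0$ the bound propagates by induction from smaller $K$; if $\msquare_m=1$, the conditions of Table \ref{table:chartdelta} that produce $(+1,0,0)$ impose enough inequalities on $a_1,a_2,a_3,K$ (e.g., $a_1+a_2+a_3\geq m$, hence $a_1\geq\lfloor m/3\rfloor$ and $a_1+a_2\geq\lfloor 2m/3\rfloor$, together with $K\geq\lfloor m/3\rfloor$) that Proposition \ref{prop:main3} can be evaluated case by case and bounded \emph{directly} by $\lfloor m/3\rfloor$ --- there is no claim that the sum is non-increasing past a peak. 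So the missing idea is not more bookkeeping in your larger case table, but the reduction of $\msquare_m$ to the already-classified two-variable differences, whose side conditions are then strong enough to bound the sum itself, not just its increments.
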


\begin{proof}
Without loss of generality, we assume that $0\leq a_3 \leq a_2 \leq a_1 \leq m-1$. We break down the proof into two cases of $K$, that is, we would like to show

$$ S_m(\{ a_1,a_2,a_3 \}, K) \leq \begin{cases}
K+1 & \mbox{ if } 0\leq K \leq \left\lfloor\frac{m}{3}\right\rfloor-1 \mbox{ (\textbf{Case A});} \\
\left\lfloor\frac{m}{3}\right\rfloor & \mbox{ if } \left\lfloor\frac{m}{3}\right\rfloor \leq K \leq \left\lfloor\frac{m}{2}\right\rfloor - 1 \mbox{ (\textbf{Case B}).}
\end{cases}$$

As above, we define a difference of sums,

$$\msquare_m:=S_m(\{ a_1,a_2,a_3\},K)-S_m(\{ a_1+1, a_2, a_3\}, K-1),$$

and note that $\msquare_m$  can be converted to $\Delta_m$ via (\ref{eq:threetotwo}) as follows:

\begin{align*}
\msquare_m = & S_m(\{ a_1,a_2,a_3 \}, K) - S_m(\{ a_1+1, a_2, a_3 \}, K-1) \\
= &  ( S_m(\{ a_1, a_2+a_3 \}, K) - S_m(\{ a_1+1,a_2+a_3 \}, K-1) ) \\
& - ( S_m(\{ a_1, a_2 \}, K) - S_m(\{ a_1+1,a_2 \}, K-1) ) \\
& - ( S_m(\{ a_1, a_3 \}, K) - S_m(\{ a_1+1,a_3 \}, K-1) ) \\
= & \Delta_m(\{ a_1,a_2+a_3\}, K) - \Delta_m(\{ a_1,a_2 \}, K) - \Delta_m(\{ a_1, a_3 \}, K).
\end{align*}

\textbf{Case A:} From the proof of Theorem \ref{thm:2varlowerbound}, we know that $$\Delta_m(\{ a_1,a_2 \}, K)\geq 0 \mbox{ and }\Delta_m(\{ a_1, a_3 \}, K) \geq 0.$$ This, coupled with the fact that $\Delta_m(\{ a_1,a_2+a_3 \}, K) \leq 1$ (by Table \ref{table:chartdelta}) gives $\msquare_m \leq 1$. Furthermore, with $S_m(\{a_1,a_2,a_3\},0)\leq 1$, we get $S_m \leq K+1$ as in the proof of Theorem \ref{thm:2var}.

\textbf{Case B:} If $\msquare_m\leq 0$, then we can use Case A (i.e. $S_m\leq K+1 \leq \left\lfloor\frac{m}{3}\right\rfloor$) to show $$S_m(\{a_1,a_2,a_3\},K)\leq S_m(\{a_1+1,a_2,a_3\},K-1) \leq \left\lfloor\frac{m}{3}\right\rfloor.$$
If $\msquare_m=1$, we show $S_m\leq \left\lfloor\frac{m}{3}\right\rfloor$ directly.  Observe that the only way to obtain $\msquare_m=1$ is when

\begin{align*}
\Delta_m(\{ a_1,a_2+a_3 \}, K) & = +1,\\
\Delta_m(\{ a_1,a_2 \}, K) & = 0,\\
\Delta_m(\{ a_1,a_3 \}, K) & = 0.
\end{align*}
This arrangement is achieved when our assumed $a_1,a_2,a_3,K$ also satisfies all conditions from Table \ref{table:chartdelta}. So, in Table \ref{table:logical}, we organize each row according to the restrictions that must be applied. The `type' refers to the logical operator on the conditions in the same row in order to achieve that particular $\Delta_m$ value.
\begin{table}[ht]
{\renewcommand{\arraystretch}{1.2}
\begin{center}
\resizebox{\columnwidth}{!}{%
\begin{tabular}{|c|cc|c|cc|}
\hline
$\Delta_m$ && Condition \textbf{a} & Type && Condition \textbf{b} \\
\hline
+1 & (1a) & $a_1+\modd{(a_2+a_3)} {m} \geq m$ & AND & (1b) & $\modd{(a_2+a_3)} {m} + K - m + 1 \leq 0$\\
\hdashline
0 & (2a) & $a_1+a_2 \geq m$ & XOR & (2b) & $a_2 + K - m +1 \leq 0$\\
\hdashline
0 & (3a) & $a_1+a_3 \geq m$ & XOR & (3b) & $a_3 + K -m + 1 \leq 0$\\
\hline
\end{tabular}%
}
\end{center}
}
\caption{Conditions on $\Delta_m$ values for Case B}
\label{table:logical}
\end{table}

Keeping these conditions in mind, we bound $S_m(\{ a_1,a_2,a_3 \}, K)$ according to whether $a_2+a_3 < m$ or not.

\textbf{Case B1:} $a_2+a_3 < m.$ Conditions (1a) and (1b) simplify to $$m\leq a_1+a_2+a_3 < 2m \mbox{ AND } a_2+a_3+K-m+1 \leq 0,$$ which therefore satisfies (2b) and (3b), implying that it also satisfies $\sim\hspace{-1mm}$ (2a) and $\sim\hspace{-1mm}$ (3a) by the XOR condition.  Thus, we get

\begin{align*}
a_1 + a_2 < m & \mbox{ AND } a_2+K-m+1 \leq 0, \\
a_1 + a_3 < m & \mbox{ AND } a_3+K-m+1 \leq 0.
\end{align*}

Applying these conditions to Proposition \ref{prop:main3} results in the following formula:

\begin{align*}
S_m&(\{ a_1,a_2,a_3 \}, K)\\
=& K+1-\max (0,a_1+a_2+K-m+1)\\
&-\max (0,a_1+a_3+K-m+1)+\max (0,a_1+K-m+1),
\end{align*}

which can be broken down into four subcases:

\textbf{B1.1: ($a_1+K-m+1>0$)} 
$$ S_m(\{ a_1,a_2,a_3\}, K) = -a_1-a_2-a_3+m \leq -m + m =0 $$

\textbf{B1.2: ($a_1+K-m+1 \leq 0$ and $a_1+a_3+K-m+1 > 0$)}
\begin{align*}
S_m &(\{ a_1,a_2,a_3\}, K)\\
&= K+1 -(a_1+a_2+K-m+1)-(a_1+a_3+K-m+1)\\
&=(m-a_1-a_2-a_3)-(a_1+K)+(m-1)\\
&\leq 0 - \left\lfloor\frac{2m}{3}\right\rfloor + (m-1)\\
&\leq \left\lfloor\frac{m}{3}\right\rfloor
\end{align*}
because $a_1+a_2+a_3\geq m$ (1a) and $a_1\geq a_2 \geq a_3$ gives $a_1\geq \left\lfloor\frac{m}{3} \right\rfloor$.  This, along with $K \geq \left\lfloor\frac{m}{3}\right\rfloor$, gives us the second to last line.

\textbf{B1.3: ($a_1+a_3+K-m+1\leq 0$ and $a_1+a_2+K-m+1 > 0$)}
$$S_m(\{ a_1,a_2,a_3\}, K)=m-a_1-a_2 \leq \left\lfloor\frac{m}{3}\right\rfloor$$ because $a_1+a_2\geq \left\lfloor\frac{2m}{3}\right\rfloor$.

\textbf{B1.4: ($a_1+a_2+K-m+1\leq 0$}) This case cannot happen because $a_1+a_2+a_3\geq m$ gives that $a_1+a_2 \geq \left\lfloor\frac{2m}{3}\right\rfloor$ which means $a_1+a_2+K+1\geq m$, contradicting the condition.

We summarize these four subcases in Table \ref{table:thm13a}.

\begin{table}[ht]
{\renewcommand{\arraystretch}{1.2}
\begin{center}
\begin{tabular}{|c|c|c|c|c|}
\hline
Subcase & $a_1+a_2+K-m+1$ & $a_1+a_3+K-m+1$ &  $a_1+K-m+1$ &$S_m$\\
\hline
1 & $>0$ & $>0$ & $>0$ & $\leq 0$\\
2 & $>0$ & $>0$ & $\leq 0$ & $\leq \left\lfloor\frac{m}{3}\right\rfloor$\\
3 & $>0$ & $\leq 0$ & $\leq 0$ & $\leq \left\lfloor\frac{m}{3}\right\rfloor$\\
4 & $\leq 0$ & $\leq 0$ & $\leq 0$ & N/A\\
\hline
\end{tabular}
\end{center}
}
\caption{Subcases for B1}
\label{table:thm13a}
\end{table}

\textbf{Case B2:} $m\leq a_2+a_3 <2m.$

Conditions (1a) and (1b) simplify to $$a_1+a_2+a_3 \geq 2m \mbox{ AND } a_2+a_3+K-2m+1\leq 0,$$ which therefore satisfies (2a) and (3a), implying that it also satisfies\\ $\sim\hspace{-1mm}$ (2b) and $\sim\hspace{-1mm}$ (3b) by the XOR condition.  Thus, we get

\begin{align*}
a_1 + a_2 \geq m & \mbox{ AND } a_2+K-m+1 > 0 \\
a_1 + a_3 \geq m & \mbox{ AND } a_3+K-m+1 > 0.
\end{align*}

Applying these conditions to Proposition \ref{prop:main3} results in the following formula:

\begin{align*}
S_m& (\{ a_1,a_2,a_3 \}, K)= -(K+1)\\
&- \max (0,a_1+a_2+K-2m+1)-\max (0,a_1+a_3+K-2m+1)\\
&+(a_1+K-m+1)+(a_2+K-m+1)+(a_3+K-m+1),
\end{align*}

which can be broken down into three subcases:

\textbf{B2.1: ($a_1+a_2+K-2m+1>0$ and $a_1+a_3+K-2m+1>0$)} 

\begin{align*}
S_m& (\{ a_1,a_2,a_3\}, K)\\
=& -(a_1+a_2+K-2m+1)-(a_1+a_3+K-2m+1)\\
&+(a_1+a_2+K-2m+1)+(a_3+K-m+1)\\
=& m-a_1\\
\leq & \left\lfloor \frac{m}{3} \right\rfloor
\end{align*}

because $a_1+a_2+a_3\geq m$ and $a_1\geq a_2 \geq a_3$ gives $a_1 \geq \left\lfloor\frac{2m}{3} \right\rfloor$.

\textbf{B2.2: ($a_1+a_2+K-2m+1>0$ and $a_1+a_3+K-2m+1\leq 0$)}

\begin{align*}
S_m(\{ a_1,a_2,a_3\}, K) = & (m-a_1) + (a_1+a_3+K-2m+1)\\
\leq & m-a_1 \\
\leq & \left\lfloor\frac{m}{3}\right\rfloor
\end{align*}

\textbf{B2.3: ($a_1+a_2+K-2m+1\leq 0$)}

\begin{align*}
S_m(\{ a_1,a_2,a_3\}, K) = & (m-a_1)+(a_1+a_2+K-2m+1)\\
& + (a_1+a_3+K-2m+1)\\
\leq& (m-a_1)\\
\leq&\left\lfloor\frac{m}{3}\right\rfloor
\end{align*}

We summarize the three cases in Table \ref{table:thm13b}.

\begin{table}[ht]
{\renewcommand{\arraystretch}{1.2}
\begin{center}
\begin{tabular}{|c|c|c|c|}
\hline
Subcase & $a_1+a_2+K-2m+1$ & $a_1+a_3+K-2m+1$ & $S_m$\\
\hline
1 & $>0$ & $>0$ & $\leq \left\lfloor\frac{m}{3}\right\rfloor$\\
2 & $>0$ & $ \leq 0$ & $\leq \left\lfloor\frac{m}{3}\right\rfloor$\\
3 & $\leq 0$ & $\leq 0$ & $\leq \left\lfloor\frac{m}{3}\right\rfloor$\\
\hline
\end{tabular}
\end{center}
}
\caption{Subcases for B2}
\label{table:thm13b}
\end{table}

The results from Tables \ref{table:thm13a} and \ref{table:thm13b} completes the proof of Case B.  Hence, we have proven the theorem.

\end{proof}

Lemma \ref{thm:3reflect} and Theorem \ref{thm:3var} give the main result, which we state as a corollary.

\begin{corollary}
For $0\leq a_1,a_2,a_3, K\leq m-1$,	$$S_m(\{ a_1,a_2,a_3 \}, K) \leq \left\lfloor\frac{m}{3}\right\rfloor.$$
\label{thm:3varfull}
\end{corollary}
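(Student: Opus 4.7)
The corollary is a direct pairing of Theorem \ref{thm:3var}, which establishes the bound $\lfloor m/3 \rfloor$ only on the ``lower half'' of $K$, with the reflection identity in Lemma \ref{thm:3reflect}, which maps the ``upper half'' onto the lower half. So the plan is to follow the same template that yielded Corollary \ref{thm:2varfull} from Theorem \ref{thm:2var} and Lemma \ref{thm:2reflect}, just one dimension higher. There is essentially no new combinatorial work to do; the whole argument is a case split on $K$.

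First I would split the range $0 \le K \le m-1$ into three pieces. For $0 \le K \le \lfloor m/2 \rfloor - 1$, Theorem \ref{thm:3var} gives $S_m(\{a_1,a_2,a_3\},K) \le \lfloor m/3 \rfloor$ immediately. For $\lfloor m/2 \rfloor \le K \le m-2$, set $K' = m-2-K$; then $0 \le K' \le m-2-\lfloor m/2 \rfloor$, and a quick check in both parities of $m$ shows $K' \le \lfloor m/2 \rfloor - 1$. Applying Lemma \ref{thm:3reflect},
\[
S_m(\{a_1,a_2,a_3\},K) = S_m(\{m-a_1,m-a_2,m-a_3\},K'),
\]
and the right-hand side is bounded by $\lfloor m/3 \rfloor$ via Theorem \ref{thm:3var} (the $a_i$ on the right lie in $[0,m-1]$, which is the hypothesis needed).

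The one remaining value is $K = m-1$, which is not covered by Lemma \ref{thm:3reflect}. Here I would argue directly that $S_m(\{a_1,a_2,a_3\},m-1) = 0$. Plugging $K = m-1$ into Proposition \ref{prop:main3}, each $\max(0,\modd{X}{m}+K-m+1)$ reduces to $\modd{X}{m}$, and each coefficient $(K+1)$ equals $m$, so the whole right-hand side becomes the inclusion-exclusion expression
\[
(a_1+a_2+a_3) - (a_1+a_2) - (a_2+a_3) - (a_1+a_3) + a_1 + a_2 + a_3,
\]
which is identically $0$. (Alternatively, one could invoke $m$-periodicity of the summand in the Tverberg formulation of Definition \ref{def:3varsum} to see the full-period sum vanish.) Since $0 \le \lfloor m/3 \rfloor$, this closes the last case.

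I do not anticipate a real obstacle: Theorem \ref{thm:3var} and Lemma \ref{thm:3reflect} have already done the heavy lifting, and the only thing to verify carefully is the arithmetic that the reflected parameter $m-2-K$ actually lands in the range handled by Theorem \ref{thm:3var}, along with the boundary computation at $K=m-1$.
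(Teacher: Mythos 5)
Your proposal is correct and follows essentially the same route as the paper: Theorem \ref{thm:3var} handles $0\leq K\leq\lfloor m/2\rfloor-1$, Lemma \ref{thm:3reflect} reflects $\lfloor m/2\rfloor\leq K\leq m-2$ into that range, and the case $K=m-1$ is disposed of by noting $S_m(\{a_1,a_2,a_3\},m-1)=0$. The only difference is that you spell out the verification that $m-2-K$ lands in the admissible range and the telescoping computation at $K=m-1$, both of which the paper leaves implicit.
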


\begin{proof}
For $0\leq K \leq \left\lfloor \frac{m}{2}\right\rfloor -1$, Theorem \ref{thm:3var} gives us our result. For $\left\lfloor\frac{m}{2}\right\rfloor \leq K \leq m-2$, $S_m(\{ a_1,a_2,a_3\}, K) = S_m(\{ m-a_1,m-a_2,m-a_3 \}, m-2-K)$ by Lemma \ref{thm:3reflect}.  From there, we apply Theorem \ref{thm:3var} to the right hand side and get the result. Finally, $S_m(\{ a_1,a_2,a_3 \}, m-1)=0.$ This completes the proof.
\end{proof}

\section{(Not So Sharp) Lower Bound for \texorpdfstring{$n=4$}{}}

Given that $0 \leq a_1,a_2,a_3,a_4,K \leq m-1$, the pattern of the maximum and minimum values of $S_m(\{a_1,a_2,a_3,a_4\},K)$ is less clear, as evidenced by some results of the computer program:

\underline{Maximum Values of Sums}
\begin{align*}
(S_1,S_2, \ldots) = & (0,4,3,8,7,12,11,16,15,20,19,24 \\ & 23,28,27,32,31,36,35,40,39,44,\ldots).
\end{align*}

\underline{Minimum Values of Sums}
\begin{align*}
(S_1, S_2, \ldots) = & (0, 0, -3, -2, -3, -6, -5, -6, -9,\\
& -8, -9, -12, -11, -12, -15, -14, \\
& -15, -18, -17, -18, -21, -20,\ldots). 
\end{align*}
It has already been shown in \cite{day}, the upper bound $$S_m(\{a_1,a_2,a_3,a_4\},K) \leq 4\left\lfloor \dfrac{m}{2} \right\rfloor.$$
We conjecture the lower bound $$ -3\left\lfloor \dfrac{m}{3}\right\rfloor  \leq  S_m(\{a_1,a_2,a_3,a_4\},K).$$ In an attempt to prove this lower bound, we found that writing a difference of sums (like $\Delta_m$ or $\msquare_m$) is not an efficient way to approach the problem. Accordingly, we use another method to obtain the following partial result.

\begin{theorem}
For  $0 \leq a_1,a_2,a_3,a_4,K \leq m-1$, $$-2\left\lfloor \dfrac{m}{2} \right\rfloor-\left\lfloor \dfrac{m}{3} \right \rfloor\leq  S_m(\{a_1,a_2,a_3,a_4\},K) .$$
\end{theorem}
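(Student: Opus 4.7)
The plan is to apply the natural four-to-three variable recursion
\begin{equation*}
S_m(\{a_1,a_2,a_3,a_4\}, K) = S_m(\{a_1, a_2, a_3+a_4\}, K) - S_m(\{a_1, a_2, a_3\}, K) - S_m(\{a_1, a_2, a_4\}, K),
\end{equation*}
which is the 4-variable analog of (\ref{eq:threetotwo}) and follows by a routine inclusion--exclusion expansion of Definition \ref{def:3varsum}. The strategy is then to combine the known 3-variable bounds on the three summands.

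Applying the 3-variable lower bound $-2\lfloor m/2\rfloor$ due to Tverberg (proved in \cite{day}) to the first summand, and our 3-variable upper bound $\lfloor m/3\rfloor$ from Corollary \ref{thm:3varfull} to each of the subtracted sums, one obtains the naive bound $S_m(\{a_1,a_2,a_3,a_4\}, K) \geq -2\lfloor m/2\rfloor - 2\lfloor m/3\rfloor$. This falls one factor of $\lfloor m/3\rfloor$ short of the target, so some refinement is required.

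To close this gap, I would develop a 4-variable analog of Proposition \ref{prop:main3} by iterating the identity (\ref{eq:threetotwo}) and then applying Proposition \ref{prop:main1} to each resulting 1-variable component, yielding a closed form $S_m(\{a_1,a_2,a_3,a_4\},K) = A(K+1) + B$, where $A$ is a signed alternating sum of $\lfloor s_T/m \rfloor$ and $B$ is the analogous alternating sum of $\max(0, s_T \bmod m + K - m + 1)$ over subsets $T \subset \{1,2,3,4\}$. A direct case analysis on the small integer values of $\lfloor(a_i+a_j)/m\rfloor$ and $\lfloor(a_i+a_j+a_k)/m\rfloor$, modelled on the subcase tables used for $n=3$ (Tables \ref{table:thm13a} and \ref{table:thm13b}), should identify that whenever both subtracted 3-variable sums approach their upper bound $\lfloor m/3\rfloor$, the structure of $A$ forces $S_m(\{a_1, a_2, a_3+a_4\}, K)$ to exceed its lower bound by at least $\lfloor m/3\rfloor$, thus recovering the claim.

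The main obstacle will be the sheer size of the case analysis for four variables: there are $\binom{4}{2} = 6$ pair floors and $\binom{4}{3} = 4$ triple floors, each taking two or three possible values, giving many more configurations than the $n=3$ case. A helpful simplification would be to first establish a 4-variable mirror symmetry (analogous to Lemma \ref{thm:3reflect}), which would halve the range of $K$ that must be analyzed and cut the bookkeeping roughly in half.
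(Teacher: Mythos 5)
Your opening decomposition and the observation that it falls short are both correct: term-by-term bounding of $S_m(\{a_1,a_2,a_3+a_4\},K)-S_m(\{a_1,a_2,a_3\},K)-S_m(\{a_1,a_2,a_4\},K)$ only yields $-2\lfloor m/2\rfloor-2\lfloor m/3\rfloor$. The problem is that everything after that point is a plan rather than a proof. The entire burden of the argument rests on the assertion that a case analysis ``should identify that whenever both subtracted 3-variable sums approach their upper bound, the first sum exceeds its lower bound by at least $\lfloor m/3\rfloor$.'' That compensation claim is exactly what would need to be proved, and you give no argument for it; you even concede that the configuration space (six pair floors, four triple floors, plus the sign patterns of the $\max$ terms) is far larger than in the $n=3$ case, where the analogous analysis already required two lemmas and seven subcases. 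As written, the proposal does not establish the theorem.

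The paper avoids this entirely by choosing a different, five-term identity in which the terms to be subtracted are mostly \emph{two}-variable sums:
\begin{align*}
S_m(\{a_1,a_2,a_3,a_4\},K)&=S_m(\{a_1+a_2+a_3,a_4\},K)-S_m(\{a_1+a_2,a_4\},K) \\
&\quad-S_m(\{a_1+a_3,a_4\},K)-S_m(\{a_2,a_3,a_4\},K)+ S_m(\{a_1,a_4\},K).
\end{align*}
Now crude term-by-term bounding suffices: the two positively-signed two-variable sums are $\geq 0$ by Theorem \ref{thm:2varlowerbound}, the two subtracted two-variable sums each cost at most $\lfloor m/2\rfloor$ by Corollary \ref{thm:2varfull}, and the single subtracted three-variable sum costs at most $\lfloor m/3\rfloor$ by Corollary \ref{thm:3varfull}, giving $-2\lfloor m/2\rfloor-\lfloor m/3\rfloor$ with no case analysis at all. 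If you want to salvage your approach, the lesson is that the inclusion--exclusion identity for $S_m$ is not unique, and the right move is to shop for a decomposition whose naive bounds already sum to the target, rather than to repair a lossy decomposition by a global case analysis.
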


\begin{proof}

We can combine the following bounds from $n=2,3$, namely,
\begin{align*}
0 & \leq  S_m(\{a_1+a_2+a_3,a_4\},K), \\ 
-\left\lfloor \dfrac{m}{2} \right\rfloor & \leq  -S_m(\{a_1+a_2,a_4\},K), \\ 
-\left\lfloor \dfrac{m}{2} \right\rfloor & \leq  -S_m(\{a_1+a_3,a_4\},K), \\ 
-\left\lfloor \dfrac{m}{3} \right\rfloor & \leq  -S_m(\{a_2,a_3,a_4\},K), \\ 
0 & \leq  S_m(\{ a_1, a_4\}, K), 
\end{align*}

along with the identity

\begin{align*}
S_m(\{a_1,a_2,a_3,a_4\},K)&=S_m(\{a_1+a_2+a_3,a_4\},K)-S_m(\{a_1+a_2,a_4\},K) \\
&-S_m(\{a_1+a_3,a_4\},K)-S_m(\{a_2,a_3,a_4\},K)+ S_m(\{a_1,a_4\},K),
\end{align*}

to obtain the claimed result.

\end{proof}

\section{Conjectures}

In order to complete the analysis on this type of floor function problem, we want to show all the upper bounds and lower bounds for any number of variables, $n$. Onphaeng and Pongsriiam \cite{day} were able to show the upper bound when $n$ is even and the lower bound when $n$ is odd.

\begin{theorem}[Onphaeng, Pongsriiam] When $n$ is even and $m$ is even, $$S_m \leq  2^{n-2} \left \lfloor \dfrac{m}{2}\right\rfloor.$$ When $n$ is odd and $m$ is even, $$-2^{n-2}	 \left\lfloor \dfrac{m}{2} \right\rfloor \leq S_m.$$ The bounds on both cases are obtained exactly at $$A = \{m/2,m/2,\ldots,m/2\}, \;\ K=m/2-1.$$
\end{theorem}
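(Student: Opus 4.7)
The plan is to establish both inequalities by induction on $n$, using the base cases from Sections 2 and 3 ($n=2$ upper bound from Corollary \ref{thm:2varfull}, $n=3$ lower bound from \cite{day}), and reducing the full statement to the pointwise estimate
\begin{equation*}
|f_m(\{a_1,\ldots,a_n\}, k)| \leq 2^{n-2} \quad \text{for } n \geq 2, \; 0 \leq a_1,\ldots,a_n,k \leq m-1,
\end{equation*}
on the inner summand of $S_m$. Granting this bound, the conclusion follows from the cancellation
\begin{equation*}
\sum_{k=0}^{m-1} f_m(\{a_1,\ldots,a_n\}, k) = \sum_{T \subset [n]} (-1)^{n-|T|} \sum_{i \in T} a_i = 0 \quad (n \geq 2),
\end{equation*}
which uses $m$-periodicity of $\lfloor \cdot /m \rfloor$ together with $\sum_{T \ni i}(-1)^{n-|T|} = 0$ for $n \geq 2$. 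Writing $S_m(A,K)$ as either $\sum_{k=0}^K f_m(A,k)$ or $-\sum_{k=K+1}^{m-1} f_m(A,k)$, whichever has fewer summands, then gives
\begin{equation*}
|S_m(A,K)| \leq \min(K+1,\, m-1-K) \cdot 2^{n-2} \leq \tfrac{m}{2}\cdot 2^{n-2} = 2^{n-2}\left\lfloor \tfrac{m}{2}\right\rfloor,
\end{equation*}
with the extremum $\tfrac{m}{2}$ attained at $K = \tfrac{m}{2}-1$ (since $m$ is even).

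The hard part is the pointwise bound $|f_m| \leq 2^{n-2}$. The direct induction via (\ref{eq:threetotwo}) specialized to a single $k$,
\begin{equation*}
f_m(\{a_1,\ldots,a_n\},k) = f_m(\{a_1+a_2, a_3,\ldots,a_n\},k) - f_m(\{a_1,a_3,\ldots,a_n\},k) - f_m(\{a_2,a_3,\ldots,a_n\},k),
\end{equation*}
only gives $|f_m| \leq 3\cdot 2^{n-3}$, off by a factor of $3/2$. To close the gap I would use the fractional-part representation
\begin{equation*}
f_m(A,k) = -\frac{1}{m}\sum_{T \subset [n]}(-1)^{n-|T|}\, r_T, \qquad r_T := \Bigl(k+\sum_{i\in T} a_i\Bigr)\bmod m,
\end{equation*}
and pair each $T$ with $T \triangle \{i(T)\}$ along a perfect matching of the Boolean lattice $2^{[n]}$, where $i(T)$ is chosen to expose wrap-arounds modulo $m$. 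Each pair contributes $\pm a_{i(T)}$ or $\pm(a_{i(T)} - m)$, of magnitude at most $m$, and a careful analysis of how the wrap patterns propagate through the matching should show that at most $2^{n-2}$ pairs contribute non-trivially.

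Equality at $A = \{\tfrac{m}{2},\ldots,\tfrac{m}{2}\}$, $K = \tfrac{m}{2}-1$ is a direct verification. For $0 \leq k \leq \tfrac{m}{2}-1$ and $|T| = j$, one computes $\lfloor(k + j\tfrac{m}{2})/m\rfloor = \lfloor j/2 \rfloor$ independent of $k$, and the identity
\begin{equation*}
\sum_{j=0}^n \binom{n}{j}(-1)^{n-j}\lfloor j/2\rfloor = (-1)^n\, 2^{n-2},
\end{equation*}
derived from $(1\pm 1)^n$ after writing $2\lfloor j/2\rfloor = j - (j \bmod 2)$, gives $f_m(A,k) = (-1)^n 2^{n-2}$. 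Summing over the $\tfrac{m}{2}$ values of $k$ yields $S_m = (-1)^n 2^{n-2}\tfrac{m}{2}$, matching the upper bound when $n$ is even and the lower bound when $n$ is odd.
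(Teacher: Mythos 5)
First, note that the paper itself gives no proof of this theorem: it is imported verbatim from Onphaeng and Pongsriiam \cite{day}, so there is no in-paper argument to compare yours against, and I am judging your proposal on its own terms. Your overall architecture is sound. The period-sum identity $\sum_{k=0}^{m-1}f_m(A,k)=0$ for $n\ge 2$ is correct (each $a_i$ occurs in half the subsets $T$ with each sign), the resulting estimate $|S_m(A,K)|\le \min(K+1,\,m-1-K)\cdot\max_k|f_m(A,k)|$ is valid and peaks at $m/2$ for $m$ even, and your equality check at $A=\{m/2,\ldots,m/2\}$, $K=m/2-1$ via $\sum_j\binom{n}{j}(-1)^{n-j}\lfloor j/2\rfloor=(-1)^n2^{n-2}$ is correct.

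The genuine gap is the pointwise bound $|f_m(A,k)|\le 2^{n-2}$, which carries the entire weight of the proof and which you only sketch: the proposed ``perfect matching of the Boolean lattice'' together with the unproved assertion that ``at most $2^{n-2}$ pairs contribute non-trivially'' is not an argument. Fortunately the bound is true and has a short proof you nearly had in hand. Splitting the alternating sum according to whether $1\in T$ gives the exact recursion
$$f_m(\{a_1,\ldots,a_n\},k)=f_m(\{a_2,\ldots,a_n\},k+a_1)-f_m(\{a_2,\ldots,a_n\},k),$$
and iterating it $n-2$ times yields
$$f_m(\{a_1,\ldots,a_n\},k)=\sum_{T\subset[1,n-2]}(-1)^{n-|T|}\,f_m\Bigl(\{a_{n-1},a_n\},\,k+\sum_{i\in T}a_i\Bigr),$$
a signed sum of exactly $2^{n-2}$ two-variable summands. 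The one subtlety is that the shifted arguments $k+\sum_{i\in T}a_i$ can exceed $m-1$, in which case $f_m(\{b_1,b_2\},k')$ lies in $\{-1,0,1\}$ rather than $\{0,1\}$ (it is a difference of two carry indicators $\lfloor x+y\rfloor-\lfloor x\rfloor-\lfloor y\rfloor$); this is precisely why the sharp constant is $2^{n-2}$ rather than $2^{n-3}$, and it makes your factor-of-$3/2$ detour unnecessary. With the triangle inequality this closes the argument. Finally, the theorem claims the bound is attained \emph{exactly} at $A=\{m/2,\ldots,m/2\}$, $K=m/2-1$; you verify attainment but say nothing about uniqueness, so that clause of the statement remains unaddressed.
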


We conjecture the missing bounds, namely, the lower bounds when $n$ is even and the upper bounds when $n$ is odd. To make these conjectures, we wrote a Maple program to calculate the values of $S_m(A,K)$ for specific $m, A,$ and $K$. For each $n$, the sets $A$ and $K$ that give extreme values of $S_m$ form interesting patterns, which depend on $m$. Once we determine such values in $A$ and $K$, we can quickly compute the extreme values of $S_m(A,K)$ for each $n$ and then use the resulting data to form a holonomic ansatz. The resulting recurrence becomes a ninth order recurrence with polynomial coefficients of degree at most 2. We summarize our findings in the following conjecture. For interested readers, this Maple code can be found on Thanatipanonda's website (\url{www.thotsaporn.com}).

\begin{conjecture} 
Let $A := \{a_1, a_2, \dots, a_n\}$ and define $\mbox{max/min }S_m(A,K)$ to be the maximum/minimum over all choices of $a_i$ and $K$ with  $0 \leq a_1, a_2, \dots, a_n, K \leq m-1$ and $m$ fixed.  Furthermore, we let the function $S_m$ be as defined in the first section. Suppose now

$$M(n):= \begin{cases} 
\mbox{max }S_m(A,K), & n \mbox{ odd}; \\
\mbox{min }S_m(A,K), & n \mbox{ even}.
\end{cases}$$ Then for $n \geq 4$, we conjecture the following result in two parts. %

\begin{enumerate}
\item{$n=4k-1, 4k$ or $4k+1,$ where $k\in\mathbb{Z}^+$
\vspace{0.2cm}

Under the condition that $m$ is a multiple of $2k+1$, the values of $M(n)$ occur exactly at
\begin{align*}
& A = \left\{\dfrac{km}{2k+1},\dfrac{km}{2k+1},\ldots,\dfrac{km}{2k+1}\right\} , K=\dfrac{km}{2k+1}-1 \\
\mbox{or }& A = \left\{\dfrac{(k+1)m}{2k+1},\dfrac{(k+1)m}{2k+1},\ldots,\dfrac{(k+1)m}{2k+1}\right\}, K=\dfrac{(k+1)m}{2k+1}-1.
\end{align*}}

\item{$n=4k+2,$ where $k\in\mathbb{Z}^+$
\vspace{0.2cm}

Under the condition that $m$ is a multiple of $2k+1$ and $2k+3$, the values of $M(n)$ occur (among other places) at
\begin{align*} 
& A = \left\{\dfrac{km}{2k+1},\dfrac{km}{2k+1},\ldots,\dfrac{km}{2k+1}\right\} , K=\dfrac{km}{2k+1}-1 \\ 
\mbox{or } & A = \left\{\dfrac{(k+1)m}{2k+1},\dfrac{(k+1)m}{2k+1},\ldots,\dfrac{(k+1)m}{2k+1}\right\}, K=\dfrac{(k+1)m}{2k+1}-1 \\
\mbox{or } & A = \left\{\dfrac{(k+1)m}{2k+3},\dfrac{(k+1)m}{2k+3},\ldots,\dfrac{(k+1)m}{2k+3}\right\}, K=\dfrac{(k+1)m}{2k+3}-1 \\
\mbox{or } & A = \left\{\dfrac{(k+2)m}{2k+3},\dfrac{(k+2)m}{2k+3},\ldots,\dfrac{(k+2)m}{2k+3}\right\}, K=\dfrac{(k+2)m}{2k+3}-1.
\end{align*}

Moreover, $M(n)$ can be calculated directly from a formula similar to the equations from Propositions \ref{prop:main2} and \ref{prop:main3}, or by $$M(n)= m\cdot f(n),$$ where $f(n)$ satisfies the recurrence relation 

\begin{align*}
-5(n+3)(n-2)f(n)=& 10(n^2+n-8)f(n-1)-4(2n^2-10n+3)f(n-2)\\
&-24(2n-11)f(n-3)-32(2n^2-10n-1)f(n-4)\\
&-192(n-1)(n-5)f(n-5)+64(2n^2-22n+51)f(n-6)\\
&+384(2n-13)f(n-7)-256(n-3)(n-8)f(n-8)\\
&+512(n-9)(n-8)f(n-9),
\end{align*}

for $n \geq 11$ with the initial conditions

\begin{align*}
& f(2)=0, f(3)=1/3, f(4)=-1, f(5)=2, f(6)=-3, f(7)=8, f(8)=-18,\\
& f(9)=36, f(10)=-65.
\end{align*}
}

\end{enumerate}
\label{conj:main}

\end{conjecture}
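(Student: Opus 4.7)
The conjecture has two essentially separate parts: an \emph{evaluation} claim (a closed form and recurrence for $M(n)/m$) and an \emph{extremality} claim (that the max/min is attained at the listed configurations). The evaluation piece is the easier half, so I would do it first.

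For evaluation, I would swap the order of summation in Tverberg's definition and use $\sum_{T\subset[1,n]}(-1)^{n-|T|}=0$ for $n\geq 1$ to obtain, for $0\le K\le m-1$,
\[
S_m(\{a_1,\ldots,a_n\},K)=\sum_{T\subset[1,n]}(-1)^{n-|T|}\,S_m\!\left(\left\{\textstyle\sum_{i\in T}a_i\right\},K\right).
\]
Proposition~\ref{prop:main1} then expands the right-hand side into $2^n$ terms of the form $\lfloor s/m\rfloor(K+1)+\max(0,(s\bmod m)+K-m+1)$ with $s=\sum_{i\in T}a_i$, generalizing Propositions~\ref{prop:main2} and~\ref{prop:main3}. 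Substituting $a_1=\cdots=a_n=a$ with $a=km/(2k+1)$ and $K=a-1$ makes every $s$ equal to $ja$ for some $0\le j\le n$, and since $\lfloor ja/m\rfloor$ and $(ja)\bmod m$ depend only on $j\bmod(2k+1)$, the formula collapses to an explicit binomial sum visibly divisible by $m$. The reflection symmetry $a\mapsto m-a$ accounts for the two listed diagonals in part~(1); the doubled divisibility hypothesis in part~(2) lets the two ``nearby'' ratios $k/(2k+1)$ and $(k+1)/(2k+3)$ both realize the extremum. Once $f(n)$ is written as such a fixed binomial sum, the nine-term polynomial recurrence is a routine output of creative telescoping (Zeilberger's algorithm), and checking the nine initial conditions is a finite computation.

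The extremality claim is the hard half. From the closed form above, $S_m$ is a piecewise affine function of $(a_1,\ldots,a_n,K)\in[0,m-1]^{n+1}$ with breakpoints along the hyperplanes $\sum_{i\in T}a_i+K\equiv m-1\pmod m$. Any global extremum therefore occurs at a vertex of this arrangement, and the symmetric configurations in the conjecture are exactly the vertices lying on the maximum number of such hyperplanes simultaneously (where many of the nested partial sums $ja$ simultaneously pass through multiples of $m$). My preferred approach is inductive in $n$: fix $K$ and all but one $a_i$, and use one-variable $\Delta_m$-type analyses as in Theorems~\ref{thm:2var} and~\ref{thm:3var} to classify the moves in a single coordinate that do not decrease (resp.\ increase) the sum. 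Iterating such symmetrizations should push the extremum onto an all-equal diagonal, where Step~2 finishes the job.

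The main obstacle is this symmetrization step. Unlike the settings where Schur convexity or rearrangement inequalities apply directly, the signed sum above is neither convex nor monotone in any coordinate, and the presence of two competing extremal diagonals in the $n=4k+2$ case (with even more potential diagonals under stronger divisibility) shows that symmetrization must be allowed to branch into several targets depending on the residues of the $a_i$ modulo $m$. Formulating an inductive invariant robust enough to accommodate this branching, yet tight enough to propagate from the base cases $n\le 3$ already established in this paper, is the central technical difficulty; a parallel challenge is to verify that no exotic non-symmetric vertex of the hyperplane arrangement can match the symmetric value, which may require a separate case analysis driven by the explicit formula from Step~1.
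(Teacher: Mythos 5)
The first thing to note is that the paper offers no proof of this statement: it appears as Conjecture \ref{conj:main} and is supported only by computational evidence (a Maple search for the extremal configurations and a holonomic ansatz fitted to the resulting data). So there is no proof in the paper to compare yours against; the relevant question is whether your proposal closes the conjecture, and it does not. You yourself flag the main gap: the extremality claim --- that the max/min over all $(a_1,\ldots,a_n,K)$ is attained on the listed diagonals, and in part (1) \emph{exactly} there --- is left entirely open. The symmetrization/induction you sketch is a plan, not an argument: you supply no invariant that survives the branching you describe, no mechanism for excluding non-symmetric vertices of the hyperplane arrangement, and no way to address the uniqueness (``occur exactly at'') assertion. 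Note also that the evaluation half is not independent of this: $M(n)=m\cdot f(n)$ is a statement about the global extremum, so without extremality your Step 1 only evaluates $S_m$ at the conjectured points, which at best certifies a one-sided inequality between $M(n)$ and $m\,f(n)$.

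A second, more concrete problem is your claim that the nine-term recurrence is ``a routine output of creative telescoping.'' The identity $S_m(A,K)=\sum_{T\subset[1,n]}(-1)^{n-|T|}S_m(\{\textstyle\sum_{i\in T}a_i\},K)$ is correct (the $-\lfloor k/m\rfloor$ terms cancel because $\sum_{T}(-1)^{n-|T|}=0$ for $n\geq 1$), and at the symmetric point $a=km/(2k+1)$, $K=a-1$ it does reduce $S_m$ to a signed binomial sum whose summand depends only on $j\bmod (2k+1)$. But the modulus $2k+1$ grows with $n$ (roughly as $n/2$), so $f(n)$ is not a hypergeometric sum with a structure that is fixed as $n$ varies, and Zeilberger's algorithm does not apply as stated; this is presumably why the authors only \emph{guessed} the recurrence by ansatz rather than deriving it. You would need an additional idea --- for instance, a uniform closed form for the inner sum valid across the four residue classes of $n$ modulo $4$, proved separately --- before the recurrence could even be verified, let alone the extremality established.
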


For convenience, we give examples of some of the bounds (and the set A for which the values of those bounds occur) produced from the conjectures above.
\begin{itemize}
\item{$n=4,5$, $m$ is a multiple of 3:
\begin{itemize}
\item[]{$n=4$: \[-3 \cdot \left\lfloor \dfrac{m}{3}\right\rfloor \leq S_m \]}
\item[]{$n=5$: \[S_m \leq 6 \cdot \left\lfloor\dfrac{m}{3}\right\rfloor \]}
\end{itemize}

For these cases, $M(n)$ occurs at:
\begin{align*}
& A = \{m/3,m/3,\ldots,m/3\}, K=m/3-1\\ 
\mbox{or }& A = \{2m/3,2m/3,\ldots,2m/3\} , K=2m/3-1.
\end{align*}
}

\item{$n=6$:
\begin{itemize} 
\item[$\bullet$]{$m$ is a multiple of 3:
\[ -9 \cdot \left\lfloor\dfrac{m}{3}\right\rfloor \leq S_m . \]
with the minimum at (among other places)

\begin{align*}
&A = \{m/3,m/3,\ldots,m/3\} , K=m/3-1 \\
\mbox{or } & A = \{2m/3,2m/3,\ldots,2m/3\} , K=2m/3-1.
\end{align*}
}

\item[$\bullet$]{$m$ is a multiple of 5:
\[ -15 \cdot \left\lfloor\dfrac{m}{5}\right\rfloor \leq S_m . \]
with the minimum is at (among other places) at

\begin{align*}
& A = \{2m/5,2m/5,\ldots,2m/5\} , K=2m/5-1\\
\mbox{or } & A = \{3m/5,3m/5,\ldots,3m/5\} , K=3m/5-1.
\end{align*}
}
\end{itemize}
}

\item{$n=7,8,9$, $m$ is a multiple of 5:
\begin{itemize}
\item[]{$n=7:$ \[  S_m \leq 40 \cdot \left\lfloor\dfrac{m}{5}\right\rfloor \]}
\item[]{$n=8:$ \[ -90 \cdot \left\lfloor \dfrac{m}{5}\right\rfloor \leq S_m \]}
\item[]{$n=9:$ \[  S_m \leq 180 \cdot \left\lfloor \dfrac{m}{5}\right\rfloor \]}
\end{itemize}

For these cases, $M(n)$ occurs at
\begin{align*}
& A = \{2m/5,2m/5,\ldots,2m/5\} , K=2m/5-1\\
\mbox{or } & A = \{3m/5,3m/5,\ldots,3m/5\} , K=3m/5-1.
\end{align*}
}

\end{itemize}

\section{Acknowledgements}

The authors would like to thank Harry Richman and an anonymous referee, for their helpful comments and suggestions to improve this manuscript.

\bigskip
\hrule
\bigskip

\noindent 2010 {\it Mathematics Subject Classification}:
Primary 11A25; Secondary 05D99

\noindent \emph{Keywords: } 
floor functions, optimization, sums

\end{document}